\date{ }
\numberwithin{equation}{section}
\numberwithin{figure}{section}
\numberwithin{table}{section}
\theoremstyle{plain}
\theoremstyle{remark}
\newtheorem{rem}{Remark}[section]
\def\bx{\mathbf{x}}
\def\bG{\mathbf{G}}
\def\cL{\mathcal{L}}
\def\cG{\mathcal{G}}
\def\cB{\mathcal{B}}
\def\cN{\mathcal{N}}
\def\cE{\mathcal{E}}
\newcommand{\ben}{\begin{eqnarray}}
\newcommand{\een}{\end{eqnarray}}
\newcommand{\beq}{\begin{equation}}
\newcommand{\eeq}{\end{equation}}
\newcommand{\bea}{\begin{array}}
\newcommand{\eea}{\end{array}}
\newcommand{\bef}{\begin{figure}[H]}
\newcommand{\eef}{\end{figure}}
\newtheorem{scheme}{Scheme}[section]
\begin{document}
\title{A Revisit of The Energy Quadratization Method with A Relaxation Technique}
\author[J. Zhao]{
Jia Zhao\affil{1}\comma \corrauth}
\address{\affilnum{1}\ Department of Mathematics \& Statistics, Utah State University, Logan, UT, USA }
\email{ {\tt jia.zhao@usu.edu.} (J.~Zhao)}

\begin{abstract}
This letter revisits the energy quadratization (EQ) method by introducing a novel and essential relaxation technique to improve its accuracy and stability. The EQ method has witnessed significant popularity in the past few years. Though acknowledging its effectiveness in designing energy-stable schemes for thermodynamically consistent models, the primary known drawback is apparent, i.e., its preserves a "modified" energy law represented by auxiliary variables instead of the original variables.  Truncation errors are introduced during numerical calculations so that the numerical solutions of the auxiliary variables are no longer equivalent to their original continuous definitions. Even though the "modified" energy dissipation law is preserved, the original energy dissipation law is not guaranteed. In this paper, we overcome this issue by introducing a relaxation technique. The computational cost of this extra technique is negligible compared with the baseline EQ method. Meanwhile, the relaxed-EQ method holds all the baseline EQ method's good properties, such as linearity and unconditionally energy stability. Then we apply the relaxed-EQ method to several widely-used phase field models to highlight its effectiveness.
\end{abstract}

\ams{}
\keywords{Energy Quadratization (EQ); Energy Stable; Phase Field Models; Onsager Principle}
\maketitle

\section{Background}
In this letter, we focus on the PDE models that respect the thermodynamical laws. They are usually named thermodynamically consistent models. In particular, when the temperature deviation is considered, the PDE model's entropy shall be non-decreasing in time. When the temperature deviation is ignored, the PDE model's Helmholtz free energy shall be non-increasing in time. These thermodynamically consistent models are broadly used to investigate problems in various fields, taking material science, life science, and mechanical engineering as examples.

In the past few decades, a community is shaped on developing numerical algorithms to solve the thermodynamically consistent models while preserving their thermodynamical structures \cite{Wang&Wang&WiseDCDS2010,Qiao&Zhang&TangSISC2011,Guill2013On,SAV-1,Han&WangJCP2015}. Among many seminal publications,  the energy quadratization (EQ) method has witnessed an intense exploration \cite{Yang&Zhao&WangJCP2017,Zhao&Yang&Gong&Zhao&Yang&Li&Wang2018,GongEnergy}. 

To illustrate the idea of the EQ method, we start with a simple example, the Allen-Cahn (AC) equation with periodic boundary condition, given as
\beq \label{eq:AC}
\partial_t \phi(\bx, t) = \varepsilon^2 \Delta \phi(\bx, t)  -\phi(\bx, t)^3 + \phi(\bx, t), \quad (\bx, t) \in  \Omega \times (0, T].
\eeq 
Define the free energy 
$$F = \int_\Omega \frac{\varepsilon^2}{2}|\nabla \phi|^2 + \frac{1}{4}(\phi^2-1)^2 d\bx,$$ 
with $\varepsilon$ a model parameter. It is known that the AC equation in \eqref{eq:AC} satisfies the energy dissipation law
\beq \label{eq:AC-energy-law}
\frac{dF}{dt} = \int_\Omega \frac{\delta F}{\delta \phi} \frac{\partial \phi}{\partial t} d\bx = - \int_\Omega (\frac{\partial \phi}{\partial t})^2 d\bx \leq 0.
\eeq 
To design numerical schemes that preserve the energy dissipation law for the AC equation, the EQ method introduces a novel perspective, namely, instead of solving the original AC equation in \eqref{eq:AC}, it solves an equivalent problem.  By introducing the auxiliary variable
$$
q(\bx, t) :=h(\phi)= \frac{\sqrt{2}}{2}(\phi^2 -1),
$$ 
and the notation $g(\phi):=\frac{\partial q}{\partial \phi} = \sqrt{2} \phi$,
the equivalent problem is formulated as
\begin{subequations} \label{eq:AC-EQ}
\begin{align}
&\partial_t \phi = \varepsilon^2 \Delta \phi(\bx, t) - q g(\phi), \\
&\partial_t q  = g(\phi) \partial_t \phi,  
\end{align}
\end{subequations}
with consistent initial condition $q|_{t=0} =  \frac{\sqrt{2}}{2}(\phi^2|_{t=0} -1)$. The reformulated problem has an equivalent energy dissipation law
\beq \label{eq:AC-EQ-energy-law}
\frac{dE}{dt} = \int_\Omega \frac{\delta E}{\delta \phi} \frac{\partial \phi}{\partial t} + \frac{\delta E}{\delta q} \frac{\partial q}{\partial t} d\bx =  - \int_\Omega (\frac{\partial \phi}{\partial t})^2 d\bx \leq 0,
\eeq 
where the modified free energy is defined as
$
E=\int_\Omega \frac{\varepsilon^2}{2}|\nabla \phi|^2 + \frac{1}{2} q^2 d\bx.
$
In the PDE level, it is known that $q=h(\phi)=\frac{\sqrt{2}}{2}(\phi^2-1)$, so that the reformulated problem in \eqref{eq:AC-EQ} is equivalent with \eqref{eq:AC}, as well as their energy dissipation laws.

However,  the numerical solutions for $q$ and the numerical results of $h(\phi)$ are not necessarily equal any more (in fact, they are not) after temporal discretization. Hence a discrete energy dissipation law in \eqref{eq:AC-EQ-energy-law} is no longer equivalent with a discrete energy dissipation law in \eqref{eq:AC-energy-law}, though we shall recognize that they are consistent with the numerical schemes' order of temporal accuracy.

With this in mind, the primary goal of this letter is to overcome this inconsistency issue. And our central idea is to introduce a relaxation technique for the numerical solutions of $q$, to penalize the numerical deviation from the original definition of $q:=h(\phi)$.

\section{Thermodynamically consistent reversible-irreversible PDE models}
\subsection{A generic model formulation}
Consider a domain $\Omega$, and denote the thermodynamic variable as $\phi$. We introduce the general formulation for a thermodynamically consistent reversible-irreversible PDE models as
\begin{subequations} \label{eq:evolution-general}
\begin{align}
&  \partial_t \phi (\bx,t) = - \cG \frac{\delta \cE}{\delta \phi} \mbox{ in } \Omega, \\
&\cB(\phi(\bx,t)) = g(\bx,t), \mbox{ on } \partial \Omega,
\end{align}
\end{subequations}
where  $\cB$ is a trace operator,  and $\cG:=\cG_a+\cG_s$ is the mobility operator,
$\cG_s$  is symmetric and positive semi-definite  to ensure thermodynamically consistency,  $\cG_a$  is  skew-symmetric,  and $\frac{\delta \cE}{\delta \phi}$ is the variational derivative of $\cE$, known as the chemical potential. Then, the triplet $(\phi,\cG,\cE)$ uniquely defines a thermodynamically consistent model. One intrinsic property  of \eqref{eq:evolution-general} owing to the thermodynamical consistency  is the energy dissipation law
\begin{subequations} \label{EDL}
\begin{align}
& \frac{d \cE}{d t} = \Big( \frac{\delta \cE}{\delta \phi},   \frac{\partial\phi}{\partial t} \Big)+ \dot{\cE}_{surf} =\dot{\cE}_{bulk}+\dot{\cE}_{surf},\\
&\dot{\cE}_{bulk}=-\Big( \frac{\delta \cE}{\delta \phi}, \cG_s \frac{\delta F}{\delta \phi} \Big) \leq 0, \quad \Big( \frac{\delta \cE}{\delta \phi}, \cG_a \frac{\delta \cE}{\delta \phi} \Big) = 0, \quad  \dot{\cE}_{surf}=\int_{\partial \Omega} g_b ds, 
\end{align}
\end{subequations}
where the inner product is defined by
$(f, g) = \int_\Omega f g d\bx$,$\forall f, g \in L^2(\Omega)$,
and $\dot{\cE}_{surf}$ is due to the boundary contribution, and $g_b$ is the boundary integrand.
When $\cG_a=0$, \eqref{eq:evolution-general} is a purely dissipative system; while $\cG_s=0$, it is a purely dispersive system. $\dot{\cE}_{surf}$ vanishes only for suitable boundary conditions, which include periodic and certain physical boundary conditions.

\subsection{Model reformulation with the energy quadratization method}
Now,  we illustrate the energy quadratization (EQ) idea on solving \eqref{eq:evolution-general}. Denote the total energy as
$\cE(\phi) = \int_\Omega e d\bx$, 
with $e$ the energy density function. We denote $L_0$ as a linear operator that can be separated from $e$. 
Introduce the auxiliary variable
\beq \label{eq:EQ-intermediate-variable}
q =: h(\phi) = \sqrt{2\Big(e -\frac{1}{2}|\cL_0^{\frac{1}{2}} \phi|^2+ \frac{A_0}{|\Omega|}\Big)},
\eeq
where  $A_0>0$ such that $q$ is a well defined real variable. Then we rewrite the energy as
\beq
\cE(\phi, q) =  \frac{1}{2}\Big(\phi, \cL_0\phi \Big) + \frac{1}{2}\Big( q, q \Big) - A_0,
\eeq 
With the EQ approach above,  we transform the free energy density into a quadratic one by introducing an auxiliary variable to "remove" the quadratic gradient term from the energy density.
Assuming $q =q(\phi,\nabla \phi)$ and denoting
$g(\phi)= \frac{\partial q}{\partial \phi}$ and  $\bG(\nabla\phi) = \frac{\partial q}{\partial \nabla \phi}$,
we reformulate  \eqref{eq:evolution-general} into an equivalent form
\begin{subequations} \label{eq:evolution-general-EQ}
\begin{align}
&\partial_t \phi = -(\cG_a + \cG_s) \Big[ \cL_0 \phi + q g(\phi) - \nabla \cdot ( q \bG(\nabla\phi))\Big] , \\
&\partial_t q = g(\phi): \partial_t \phi + \bG(\nabla \phi): \nabla \partial_t \phi, 
\end{align}
\end{subequations}
with the consistent initial condition $q|_{t=0} =\left. \sqrt{2\Big(e -\frac{1}{2}|\cL_0^{\frac{1}{2}} \phi|^2+ \frac{A_0}{|\Omega|}\Big)}\right|_{t=0}$.
Now, instead of dealing with \eqref{eq:evolution-general} directly, we develop   structure-preserving schemes for    \eqref{eq:evolution-general-EQ}.

The advantage of using  model \eqref{eq:evolution-general-EQ} over   model \eqref{eq:evolution-general} is that the   energy density is transformed into a quadratic one  in  \eqref{eq:evolution-general-EQ}.
Denoting $\Psi =\begin{bmatrix} \phi \\ q \end{bmatrix}$, we rewrite \eqref{eq:evolution-general-EQ} into a compact from
\beq \label{eq:evolution-general-EQ-vector}
\partial_t \Psi  = -\cN(\Psi) \cL \Psi, 
\eeq 
where  $\cL$ is a linear operator, and $\cN(\Psi)$ is the mobility operator as
\beq 
\cN(\Psi) = \cN_s(\Psi) + \cN_a(\Psi),  \quad \cN_a(\Psi) = \cN_0^\ast \cG_a  \cN_0 , \quad  \cN_s(\Psi) = \cN_0^\ast \cG_s \cN_0.
\eeq
Here
$\cN_0= (1, \,\,\, g(\phi) + \bG(\nabla \phi)\colon\nabla)$,
$\cL = \begin{bmatrix}
\cL_0 & \\
& 1
\end{bmatrix},
$
and $\cN_0^\ast$ is the adjoint operator of $\cN_0$. When $\dot{\cE}_{surf}=0$, the energy law is 
\beq  \label{eq:energy-law-EQ}
\frac{d \cE(\Psi)}{d t}
= \Big( \frac{\delta \cE}{\delta \Psi} \frac{d \Psi}{d t}, 1 \Big)
= -\Big( \ \cL \Psi,  \cN(\Psi) \cL \Psi  \Big) = -\Big( \cN_0 \cL \Psi,  \cG_s \cN_0 \cL \Psi  \Big)   \leq  0.
\eeq

\subsection{Model examples}
To demonstrate this methodology's generality, we introduce a few widely-used PDE models that can be cast as special cases.

\textbf{Allen Cahn equation.}
First of all, let us consider the Allen-Cahn equation
\beq
\partial_t \phi = \varepsilon^2 \Delta \phi - \phi^3 +\phi.
\eeq 
It can be formulated in an energy variation form of \eqref{eq:evolution-general} with $\cG=1$ and $F = \int_\Omega \frac{\varepsilon^2}{2}|\nabla \phi|^2 + \frac{1}{4}(\phi^2-1)^2d\bx$.
Next, if we introduce $q := h(\phi) = \frac{1}{\sqrt{2}}(\phi^2 -1 - \gamma_0)$, we will have $g(\phi) :=\frac{\partial q}{\partial \phi} = \sqrt{2}\phi$. Then the model is reformulated into the quadratization form of \eqref{eq:evolution-general-EQ} as
\begin{subequations}
\begin{align}
& \partial_t \phi = \varepsilon^2 \Delta \phi - \gamma_0 \phi - g(\phi) q, \\
& \partial_t q = g(\phi) \partial_t \phi,
\end{align}
\end{subequations}
with the consistent initial condition $q|_{t=0} = h(\phi|_{t=0})$.
Its matrix formulation of \eqref{eq:evolution-general-EQ-vector} can also be easily obtained, by noticing $\cL_0 =-\varepsilon^2 \Delta + \gamma_0$.  %Hence, the proposed relaxed EQ numerical schemes can be applied.

\textbf{Cahn-Hilliard (CH) equation.}
In the second example, we consider the Cahn-Hilliard equation given as
\begin{subequations}
\begin{align}
& \partial_t \phi = M \Delta \mu, \\
& \mu = -\varepsilon^2 \Delta \phi + f'(\phi),
\end{align}
\end{subequations}
where $f(\phi)$ is the bulk potential. In this paper, we choose the double-well potential $f(\phi) = \frac{1}{4}(\phi^2-1)^2$. Other potentials, such as the Flory-Huggins potential can also be used. It can be reformulated in an energy variational form of \eqref{eq:evolution-general} with the notations  $\cG = -\Delta$, and $F =\int_\Omega \Big[  \frac{\varepsilon^2}{2}|\nabla \phi|^2 + f(\phi) \Big] d\bx$.
Then, we introduce $q:=h(\phi) = \frac{1}{\sqrt{2}}(\phi^2-1-\gamma_0)$ and $g(\phi):=\frac{\partial q}{\partial \phi} = \sqrt{2} \phi$, leading us to the quadratized form of \eqref{eq:evolution-general-EQ} as
\begin{subequations}
\begin{align}
&\partial_t \phi = \Delta (-\varepsilon^2 \phi + \gamma_0 \phi - g(\phi) q), \\
&\partial_t q = g(\phi) \partial_t \phi,
\end{align}
\end{subequations}
with the consistent initial condition $q|_{t=0} = h(\phi|_{t=0})$.

\textbf{Molecular beam epitaxy (MBE) model.}
Then, we focus on the molecular beam epitaxy (MBE) model with slope section. The model reads
\beq
\partial_t \phi = -M \Big[ \varepsilon^2 \Delta \phi^2 - \nabla \cdot ( (|\nabla \phi|^2  -1) \nabla \phi) \Big].
\eeq 
Similarly, this model can be written in an energy variation form of \eqref{eq:evolution-general} as $\cG=M$ and $F = \int_\Omega \Big[ \frac{\varepsilon^2}{2}(\Delta \phi)^2 + \frac{1}{4}(|\nabla \phi|^2 - 1)^2 \Big] d\bx$.
If we introduce the notations
$q :=h(\phi) =\frac{\sqrt{2}}{2}(|\nabla \phi|^2 - 1 -\gamma_0)$, and $\bG(\nabla \phi) :=\frac{\partial q}{ \partial \nabla \phi} = \sqrt{2} \nabla \phi
$, the MBE model can be written in the quadratized form as
\begin{subequations}
\begin{align}
& \partial_t \phi = -M \Big[ \varepsilon^2 \Delta^2 \phi - \gamma \Delta \phi  - \nabla \cdot (q \bG(\nabla \phi)) \Big], \\
& \partial_t q = \bG(\nabla \phi) : \nabla \partial_t \phi,
\end{align}
\end{subequations} 
with a consistent initial condition for $q$.

\textbf{Phase field crystal (PFC) equation.}
Next, we investigate the phase field crystal (PFC) equation which reads as
\beq
\partial_t \phi = \Delta ( (a_0+\Delta )^2 \phi + f'(\phi)), \quad f(\phi) =\frac{1}{4} \phi^4 - \frac{b_0}{2} \phi^2,
\eeq 
with $a_0$ and $b_0$ are model parameters. The PFC model can also be written in an energy variational form of \eqref{eq:evolution-general} with $\cG= \Delta$ , and $F = \int_\Omega  \Big\{  \frac{1}{2} \phi \Big[ -b_0 + (a_0+\Delta)^2  \Big] \phi + \frac{1}{4}\phi^4 \Big\} d\bx$. 
Then, if we introduce the notations 
$q :=h(\phi)= \sqrt{2} \sqrt{\frac{1}{4}\phi^4 - \frac{b_0}{2} \phi^2 - \frac{\gamma_0}{2}\phi^2 + C_0}$ and $ g(\phi): = \frac{\partial q}{\partial \phi} =\frac{\phi^3- (b_0+\gamma_0) \phi }{\sqrt{2} \sqrt{\frac{1}{4}\phi^4 - \frac{b_0}{2} \phi^2 - \frac{\gamma_0}{2}\phi^2 + C_0}}$,
we can rewrite the PFC model into the quadratizated form of \eqref{eq:evolution-general-EQ} as
\begin{subequations}
\begin{align}
&\partial_t \phi = \Delta \Big[ (a_0+\Delta)^2 \phi + \gamma_0\phi + g(\phi) q \Big], \\
&\partial_t q = g(\phi) \partial_t \phi.
\end{align}
\end{subequations}
%Thus, the proposed numerical schemes can be directly applied.

\section{The relaxed energy quadratization method}

\subsection{Baseline EQ schemes}
We recall the classical EQ numerical algorithms to solve the generalized thermodynamically consistent problems. Mainly, we discretize all the linear terms implicitly and all the nonlinear terms explicitly. We will end up with some energy stable and linear numerical algorithms.
\begin{scheme}[CN Scheme] \label{scheme:CN}
After we calculate $\Psi^{n-1}$ and $\Psi^{n}$, we can update $\Psi^{n+1}$ via the following scheme
$$
\frac{\Psi^{n+1} - \Psi^n}{\delta t} = - \cN(\overline{\Psi}^{n+\frac{1}{2}}) \cL\Psi^{n+\frac{1}{2}},
$$
with the notation $\overline{\Psi}^{n+\frac{1}{2}} = \frac{3}{2}\Psi^n -\frac{1}{2}\Psi^{n-1}$. 
\end{scheme}
%In a similar manner, we can propose the BDF2 scheme as follows.
\begin{scheme}[BDF2 Scheme] \label{scheme:BDF2}
After we calculate $\Psi^{n-1}$ and $\Psi^n$, we can update $\Psi^{n+1}$ via the following scheme
$$
\frac{3\Psi^{n+1} - 4\Psi^n+ \Psi^{n-1}}{2\delta t} = - \cN(\overline{\Psi}^{n+1}) \cL\Psi^{n+1}.
$$
with the notation $\overline{\Psi}^{n+1} = 2\Psi^n - \Psi^{n-1}$
\end{scheme}

It could be easily verified that the EQ schemes are uniquely solvable and unconditionally energy stable \cite{Zhao&Yang&Gong&Zhao&Yang&Li&Wang2018}. Meanwhile, the known issue is that energy is modified, since $q$ does not necessarily equal to its original definition of \eqref{eq:EQ-intermediate-variable} after discretization, i.e. $q^{n+1} \neq h(\phi^{n+1})$ in general. 

\subsection{Relaxed EQ schemes}
To overcome this inconsistency issue of the modified energy and the original energy in the baseline EQ method above, we come up with the novel relaxed EQ schemes as follows, as an analogy to the two baseline EQ schemes.

\begin{scheme}[Relaxed CN Scheme] \label{scheme:CN-relax}
After we calculate $\Psi^{n-1}$ and $\Psi^{n}$, we can update $\Psi^{n+1}$ via the following two step schemes:
\begin{itemize}
\item Step 1, use the baseline EQ method to obtain the intermediate solutions. Denote $\hat{\Psi}^{n+\frac{1}{2}} = \frac{1}{2} \hat{\Psi}^{n+1} + \frac{1}{2} \Psi^n$, and we find $\hat{\Psi}^{n+1} := \begin{bmatrix}
\phi^{n+1} \\
\hat{q}^{n+1}
\end{bmatrix}$, via solving the equation
\beq  \label{eq:CN-step1}
\frac{\hat{\Psi}^{n+1} - \Psi^n}{\delta t} = - \cN(\overline{\Psi}^{n+\frac{1}{2}}) \cL \hat{\Psi}^{n+\frac{1}{2}}.
\eeq 

\item Step 2, update the baseline solutions with a relaxation step. We update $q^{n+1}$ as 
\beq \label{eq:CN-xi}
q^{n+1} = \xi_0 \hat{q}^{n+1} + (1-\xi_0) h(\phi^{n+1}),  %\quad \xi_0 \in [0, 1]. 
\eeq 
where $\xi_0$ is calculated from
$$\xi_0 = \max\{0, \frac{-b - \sqrt{b^2 - 4ac}}{2a}\},$$
with the coefficient $a$, $b$ and $c$ given by
\begin{subequations}
	\begin{align}
	& a = \frac{1}{2}  \| \hat{q}^{n+1} - h(\phi^{n+1}) \|^2, \quad b = \Big(\hat{q}^{n+1}, h(\phi^{n+1}) \Big) - \| h(\phi^{n+1}) \|^2,\\
	& c = \frac{1}{2} \| h(\phi^{n+1})\|^2 - \frac{1}{2}\| \hat{q}^{n+1} \|^2 - \delta t  \eta \Big( \cL\hat{\Psi}^{n+\frac{1}{2}}, \cN(\overline{\Psi}^{n+\frac{1}{2}}) \cL \hat{\Psi}^{n+\frac{1}{2}}  \Big).
	\end{align}
\end{subequations}
\end{itemize}
Here $\eta \in [0, 1]$ is an artificial parameter that can be assigned.
\end{scheme}

\begin{rem}
Here we explain the idea for the relaxation step. $\xi_0$ is a solution for the optimization problem: 
\beq  
\xi_0 = \min_{\xi \in [0, 1]} \xi, \quad  \mbox{ s.t. }
\frac{1}{2}(q^{n+1} , q^{n+1}) - \frac{1}{2}( \hat{q}^{n+1}, \hat{q}^{n+1}) \leq \delta t  \eta \Big( \cL\hat{\Psi}^{n+\frac{1}{2}}, \cN(\overline{\Psi}^{n+\frac{1}{2}}) \cL \hat{\Psi}^{n+\frac{1}{2}}  \Big).
\eeq 
It is not hard to show that $\xi = 1$ is in the feasible domain, since $a+b+c \leq 0$. In addition with  $a>0$, we can easily have $\Delta = b^2 - 4ac \geq 0$. Then we have two real roots for the quadratic equation $a\xi^2 + b\xi + c=0$ that are given by $\xi = \frac{-b \pm \sqrt{b^2 - 4ac}}{2a}$.  Since $ \delta t \eta  \Big(\cL \hat{\Psi}^{n+\frac{1}{2}}, \cN(\overline{\Psi}^{n+\frac{1}{2}}) \cL \hat{\Psi}^{n+\frac{1}{2}}  \Big) \geq 0$, we know that $ \frac{-b - \sqrt{b^2 - 4ac}}{2a} \leq 1$, so that we obtain the solution for \eqref{eq:CN-xi} as
$$\xi_0 = \max\{0, \frac{-b - \sqrt{b^2 - 4ac}}{2a}\}.$$
\end{rem}

\begin{rem}
We emphasize that $\eta \in [0, 1]$  in \eqref{eq:CN-xi} controls the relaxation strength.
When $\eta=0$, a minimum relaxation is introduced. When $\eta=1$ a maximum relaxation is introduced. 
\end{rem}

\begin{rem}
The Scheme \ref{scheme:CN-relax} is second-order accurate in time. Notice the fact, $\hat{q}^{n+1} = q(t_{n+1}) + O(\delta t^2)$ and $h(\phi^{n+1}) = q(t_{n+1}) +O(\delta t^2)$. Thus, $\forall \xi \in [0, 1]$, $\xi \hat{q}^{n+1} + (1-\xi) h(\phi^{n+1}) = q(t_{n+1}) + O(\delta t^2)$. I.e., the relaxation step doesn't affect the order of accuracy for the proposed schemes.
\end{rem}

%\begin{rem}
%We emphasis that the optimization problem in \eqref{eq:CN-xi} is trivial to be solved, adding negligible computational cost. In fact, equation \eqref{eq:CN-xi} can be simplified as the following algebra optimization problem
%\beq
%\xi_0 = \min_{\xi \in [0, 1],  a \xi^2 + b\xi + c \leq 0 } \xi,
%\eeq 
%where the coefficients are given by
%\begin{subequations}
%\begin{align}
%& a = \frac{1}{2}  \| \hat{q}^{n+1} - g(\phi^{n+1}) \|^2, \quad b = \Big(\hat{q}^{n+1}, g(\phi^{n+1}) \Big) + \| g(\phi^{n+1}) \|^2,\\
%& c = \frac{1}{2} \| g(\phi^{n+1})\|^2 - \frac{1}{2}\| \hat{q}^{n+1} \|^2 - \delta t  \eta \Big( \hat{\Psi}^{n+\frac{1}{2}}, \cN(\overline{\Psi}^{n+\frac{1}{2}}) \cL \hat{\Psi}^{n+\frac{1}{2}}  \Big).
%\end{align}
%\end{subequations}
%It is not hard to show that $\xi = 1$ is in the feasible domain, since $a+b+c \leq 0$. In addition with  $a>0$, we can easily have $\Delta = b^2 - 4ac > 0$. Then we have two real roots for the quadratic equation $a\xi^2 + b\xi + c=0$ that are given by $\xi = \frac{-b \pm \sqrt{b^2 - 4ac}}{2a}$.  Since $ \delta t \eta  \Big( \hat{\Psi}^{n+\frac{1}{2}}, \cN(\overline{\Psi}^{n+\frac{1}{2}}) \cL \hat{\Psi}^{n+\frac{1}{2}}  \Big) \geq 0$, we know that $ \frac{-b - \sqrt{b^2 - 4ac}}{2a} \leq 1$, so that we obtain the solution for \eqref{eq:CN-xi} as
%$$\xi_0 = \max\{0, \frac{-b - \sqrt{b^2 - 4ac}}{2a}\}.$$
%\end{rem}

\begin{rem}
We point out the following connections. When $\xi_0=1$, the Scheme \ref{scheme:CN-relax} reduces to the baseline CN scheme \ref{scheme:CN}. When $\xi_0 \in (0, 1)$, the Scheme \ref{scheme:CN-relax} relaxes the solution $q^{n+1}$ to be closer to its original definition of \eqref{eq:EQ-intermediate-variable} in the continuous level. And when $\xi_0=0$, the solution $q^{n+1}$ is consistent to its original definition of \eqref{eq:EQ-intermediate-variable}. In practice, the artificial parameter $\eta \in [0, 1]$ is used to control how much relaxation is added.
\end{rem}

\begin{theorem}
The Scheme \ref{scheme:CN-relax} is unconditionally energy stable.
\end{theorem}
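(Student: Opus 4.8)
The plan is to track the fully discrete energy $\cE^n := \cE(\Psi^n) = \frac{1}{2}(\phi^n, \cL_0 \phi^n) + \frac{1}{2}\|q^n\|^2 - A_0$ across the two stages of Scheme \ref{scheme:CN-relax} separately and to show that neither stage increases it. Since $\phi^{n+1}$ is produced in Step 1 and left untouched by the relaxation, I would introduce the intermediate energy $\hat{\cE}^{n+1} := \frac{1}{2}(\phi^{n+1}, \cL_0 \phi^{n+1}) + \frac{1}{2}\|\hat{q}^{n+1}\|^2 - A_0$ attached to the baseline output $\hat{\Psi}^{n+1}$, prove the two inequalities $\hat{\cE}^{n+1} \le \cE^n$ (from Step 1) and $\cE^{n+1} \le \hat{\cE}^{n+1} + \delta t\,\eta\,D$ (from Step 2), where $D := \big( \cL\hat{\Psi}^{n+\frac{1}{2}}, \cN(\overline{\Psi}^{n+\frac{1}{2}}) \cL \hat{\Psi}^{n+\frac{1}{2}} \big)$, and then add them.

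Step 1 is the standard Crank--Nicolson estimate for the baseline EQ scheme. I would test \eqref{eq:CN-step1} with $\cL \hat{\Psi}^{n+\frac{1}{2}}$ in the $L^2$ inner product. Using that $\cL = \mathrm{diag}(\cL_0, 1)$ is self-adjoint and that $\hat{\Psi}^{n+\frac{1}{2}} = \frac{1}{2}(\hat{\Psi}^{n+1} + \Psi^n)$, the left-hand side telescopes to $\frac{1}{\delta t}(\hat{\Psi}^{n+1} - \Psi^n, \cL \hat{\Psi}^{n+\frac{1}{2}}) = \frac{1}{\delta t}(\hat{\cE}^{n+1} - \cE^n)$, while the right-hand side equals $-D$. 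Because the skew-symmetric part contributes nothing to this quadratic form, $(\cN_a u, u)=0$, and $\cN_s = \cN_0^\ast \cG_s \cN_0$ is positive semi-definite, I obtain $D = (\cL\hat{\Psi}^{n+\frac{1}{2}}, \cN_s \cL\hat{\Psi}^{n+\frac{1}{2}}) \ge 0$, hence $\hat{\cE}^{n+1} - \cE^n = -\delta t\, D \le 0$. This reuses exactly the structure of the energy law \eqref{eq:energy-law-EQ} for the generic model \eqref{eq:evolution-general-EQ-vector}.

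For Step 2, I would note that the relaxation changes only the $q$-component, so $\cE^{n+1} - \hat{\cE}^{n+1} = \frac{1}{2}\|q^{n+1}\|^2 - \frac{1}{2}\|\hat{q}^{n+1}\|^2$. The crux is that the computed $\xi_0$ satisfies the constraint of the optimization problem in the Remark, namely $\frac{1}{2}\|q^{n+1}\|^2 - \frac{1}{2}\|\hat{q}^{n+1}\|^2 \le \delta t\, \eta\, D$. I would verify this by writing $q^{n+1} = h(\phi^{n+1}) + \xi_0\big(\hat{q}^{n+1} - h(\phi^{n+1})\big)$ and checking that the constraint is exactly $a\xi_0^2 + b\xi_0 + c \le 0$ with the stated coefficients $a,b,c$; this holds with equality when the $\max$ selects the root $\xi_0 = \frac{-b-\sqrt{b^2-4ac}}{2a} > 0$ of the upward-opening parabola, and holds because $c \le 0$ when the $\max$ selects $\xi_0 = 0$. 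Feasibility of the program (so that a valid $\xi_0$ exists in $[0,1]$) follows from $a+b+c = -\delta t\,\eta\,D \le 0$, which is just the nonnegativity of $D$ established in Step 1. Adding the two bounds gives $\cE^{n+1} - \cE^n \le -(1-\eta)\,\delta t\, D \le 0$, since $\eta \in [0,1]$ and $D \ge 0$, which is unconditional energy stability.

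The main obstacle is the bookkeeping in Step 2: confirming that the inequality produced by the $\max\{0,\cdot\}$ truncation is the correct one in both branches, and that the dissipation quantity $D$ appearing in $c$ is \emph{identical} to the one emerging from the Step 1 telescoping, so that the two estimates genuinely share the same $D$ and the factors $\eta$ and $1-\eta$ combine cleanly. Everything else---self-adjointness of $\cL$, positive semi-definiteness of $\cN_s$, and the vanishing of the skew-symmetric contribution---is inherited from the properties of \eqref{eq:evolution-general-EQ-vector} and from the already-established stability of the baseline scheme.
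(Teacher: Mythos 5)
Your proposal is correct and follows essentially the same route as the paper: test the Step-1 equation with $\cL\hat{\Psi}^{n+\frac{1}{2}}$ to get the baseline energy identity, use the constraint satisfied by $\xi_0$ to bound $\frac{1}{2}\|q^{n+1}\|^2-\frac{1}{2}\|\hat{q}^{n+1}\|^2$ by $\delta t\,\eta\,D$, and add the two estimates. The only difference is that you spell out details the paper delegates to its remark (the two branches of the $\max$, feasibility via $a+b+c\le 0$, and the sign of $D$), which is a welcome but not substantively different elaboration.
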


\begin{proof}
As a matter of fact, if we take inner product of \eqref{eq:CN-step1} with $\delta t \cL \hat{\Psi}^{n+\frac{1}{2}}$, we will have
\beq \label{eq:proof-cn-step1}
\frac{1}{2} \Big[ (\phi^{n+1}, \cL_0 \phi^{n+1}) + \|\hat{q}^{n+1}\|^2 \Big] - \frac{1}{2}  \Big[ (\phi^n, \cL_0 \phi^n) + \| q^n\|^2 \Big] = -\delta t \Big(\cL \hat{\Psi}^{n+\frac{1}{2}}, \cN(\overline{\Psi}^{n+\frac{1}{2}}) \cL \hat{\Psi}^{n+\frac{1}{2}}  \Big).
\eeq 
Also, from \eqref{eq:CN-xi}, we know
\beq \label{eq:proof-cn-step2}
\frac{1}{2} \| q^{n+1} \|^2 - \frac{1}{2}\| \hat{q}^{n+1} \|^2 \leq \delta t\eta \Big(\cL \hat{\Psi}^{n+\frac{1}{2}}, \cN(\overline{\Psi}^{n+\frac{1}{2}}) \cL \hat{\Psi}^{n+\frac{1}{2}}  \Big).
\eeq 
Adding two equations \eqref{eq:proof-cn-step1} and \eqref{eq:proof-cn-step2} together gives us the energy inequality
\begin{multline}
\frac{1}{2} \Big[ (\phi^{n+1}, \cL_0 \phi^{n+1}) + \| q^{n+1}\|^2 \Big] - \frac{1}{2}  \Big[ (\phi^n, \cL_0 \phi^n) + \| q^n\|^2 \Big]  \\
\leq  -\delta t (1-\eta) \Big(\cL \hat{\Psi}^{n+\frac{1}{2}}, \cN(\overline{\Psi}^{n+\frac{1}{2}}) \cL \hat{\Psi}^{n+\frac{1}{2}}  \Big) \leq 0.
\end{multline}
\end{proof}

Similarly, we can propose the relaxed BDF2 scheme as follows.
\begin{scheme}[Relaxed BDF2 Scheme] \label{scheme:BDF2-relax}
After we calculate $\Psi^{n-1}$ and $\Psi^{n}$, we can update $\Psi^{n+1}$ via the following two step schemes:
\begin{itemize}
\item Step1, we obtain the intermediate solutions via the baseline EQ method. Specifically, we calculate 
$\hat{\Psi}^{n+1} := \begin{bmatrix}
\phi^{n+1} \\
\hat{q}^{n+1}
\end{bmatrix}$ via solving the equation
\beq  \label{eq:BDF2-step1}
\frac{3\hat{\Psi}^{n+1} - 4 \Psi^n +  \Psi^n}{2\delta t} = - \cN(\overline{\Psi}^{n+1}) \cL \hat{\Psi}^{n+1}.
\eeq 
\item Step2, we update the baseline solutions via a relaxation step. We update $q^{n+1}$ by  
\beq  \label{eq:BDF2-xi}
q^{n+1} = \xi_0 \hat{q}^{n+1} + (1-\xi_0) h(\phi^{n+1}),
\eeq 
with 
$\xi_0 = \max\{0, \frac{-b - \sqrt{b^2 - 4ac}}{2a}\}$, where the coefficients are
\begin{subequations}
	\begin{align}
	& \nonumber a = \frac{5}{4}   \| \hat{q}^{n+1} - h(\phi^{n+1})\|^2,  b = \frac{1}{2}   \Big( \hat{q}^{n+1}-h(\phi^{n+1}), h(\phi^{n+1}) \Big) + \Big( \hat{q}^{n+1} - h(\phi^{n+1}), 2h(\phi^{n+1}) - q^n \Big), \\
	& \nonumber c = \frac{1}{4} \Big[ \| h(\phi^{n+1})\|^2 + \| 2h(\phi^{n+1}) - q^n \|^2   - \| \hat{q}^{n+1} \|^2 - \| 2\hat{q}^{n+1}-q^n\|^2 \Big]  -  \delta t \eta \Big(\cL \hat{\Psi}^{n+1}, \cN(\overline{\Psi}^{n+1}) \cL \hat{\Psi}^{n+1}  \Big).
	\end{align}
	Here $\eta \in [0, 1]$ is an artificial parameter that can be manually assigned. 
\end{subequations}
\end{itemize}
\end{scheme}

\begin{rem}
In a similar manner, the readers shall notice the extra relaxation step \eqref{eq:BDF2-xi}  is cheap-and-easy to compute.
We comment on the idea for the relaxation step. $\xi$ is a solution of the optimization problem
\begin{multline}
\xi_0  = \min_{\xi \in [0, 1]} \xi, \quad \mbox{ subject to } \quad \frac{1}{4}( \| q^{n+1}\|^2 + \| 2q^{n+1} - q^n\|^2) - \\
\frac{1}{4}( \| \hat{q}^{n+1} \|^2 + \| 2\hat{q}^{n+1}-q^n\|^2) \leq \delta t \eta \Big(\cL \hat{\Psi}^{n+1}, \cN(\overline{\Psi}^{n+1}) \cL \hat{\Psi}^{n+1}  \Big),
\end{multline}
where $\eta \in [0, 1]$ is an artificial parameter that can be manually assigned. 
\end{rem}

\begin{rem}
 In practice, it can be simplified as solving the following algebra optimization problem
\beq
\xi_0 = \arg\min_{\xi \in [0, 1]} \xi,  \quad \mbox{  subject to } a \xi^2 + b\xi + c \leq 0,
\eeq 
where the coefficients for the quadratic equation are
\begin{subequations}
\begin{align}
& \nonumber a = \frac{5}{4}   \| \hat{q}^{n+1} - h(\phi^{n+1})\|^2,  b = \frac{1}{2}   \Big( \hat{q}^{n+1}-h(\phi^{n+1}), h(\phi^{n+1}) \Big) + \Big( \hat{q}^{n+1} - h(\phi^{n+1}), 2h(\phi^{n+1}) - q^n \Big) \\
& \nonumber c = \frac{1}{4} \Big[ \| h(\phi^{n+1})\|^2 + \| 2h(\phi^{n+1}) - q^n \|^2   - \| \hat{q}^{n+1} \|^2 - \| 2\hat{q}^{n+1}-q^n\|^2 \Big]  -  \delta t \eta \Big(\cL \hat{\Psi}^{n+1}, \cN(\overline{\Psi}^{n+1}) \cL \hat{\Psi}^{n+1}  \Big).
\end{align}
\end{subequations}
By a similar argument, we have 
$$\xi_0 = \max\{0, \frac{-b - \sqrt{b^2 - 4ac}}{2a}\}.$$
\end{rem}

\begin{theorem}
The Scheme \ref{scheme:BDF2-relax} is unconditionally energy stable.
\end{theorem}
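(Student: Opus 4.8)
The plan is to reproduce the two-step argument used for the energy stability of Scheme~\ref{scheme:CN-relax}, replacing the Crank--Nicolson telescoping with the standard BDF2 energy identity. The natural discrete energy to track is the BDF2 functional
$$
\tilde{E}[\Psi^{n+1},\Psi^n] = \tfrac14\Big[(\phi^{n+1},\cL_0\phi^{n+1}) + \|q^{n+1}\|^2 + (2\phi^{n+1}-\phi^n,\cL_0(2\phi^{n+1}-\phi^n)) + \|2q^{n+1}-q^n\|^2\Big],
$$
whose $q$-part coincides exactly with the $\tfrac14$-weighted norms appearing in the coefficients $a,b,c$ of Scheme~\ref{scheme:BDF2-relax}.

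First I would test the Step~1 equation \eqref{eq:BDF2-step1} (whose left-hand side is the BDF2 difference operator $\tfrac{1}{2\delta t}(3\hat{\Psi}^{n+1}-4\Psi^n+\Psi^{n-1})$) against $\delta t\,\cL\hat{\Psi}^{n+1}$. Because $\cL$ is block-diagonal---$\cL_0$ on the $\phi$-slot, the identity on the $q$-slot---the left-hand side splits into an $\cL_0$-weighted $\phi$-block and an $L^2$ $q$-block, and on each block I would invoke the polarization identity
$$
\big(3u^{n+1}-4u^n+u^{n-1},\,u^{n+1}\big) = \tfrac12\Big[|u^{n+1}|^2 - |u^n|^2 + |2u^{n+1}-u^n|^2 - |2u^n-u^{n-1}|^2 + |u^{n+1}-2u^n+u^{n-1}|^2\Big].
$$
This produces $\tilde{E}[\hat{\Psi}^{n+1},\Psi^n] - \tilde{E}[\Psi^n,\Psi^{n-1}]$ together with a curvature term $\tfrac14\|\hat{\Psi}^{n+1}-2\Psi^n+\Psi^{n-1}\|_{\cL}^2\ge 0$, which I would discard. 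The right-hand side equals $-\delta t\big(\cL\hat{\Psi}^{n+1},\cN(\overline{\Psi}^{n+1})\cL\hat{\Psi}^{n+1}\big)$, which is $\le 0$ by the same computation as in \eqref{eq:energy-law-EQ} (the skew part $\cN_a$ contributes nothing and $\cN_s=\cN_0^\ast\cG_s\cN_0$ is positive semidefinite). This step yields the BDF2 analogue of \eqref{eq:proof-cn-step1}:
$$
\tilde{E}[\hat{\Psi}^{n+1},\Psi^n] - \tilde{E}[\Psi^n,\Psi^{n-1}] \le -\delta t\big(\cL\hat{\Psi}^{n+1},\cN(\overline{\Psi}^{n+1})\cL\hat{\Psi}^{n+1}\big).
$$

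Second, I would use the relaxation constraint built into Step~2: by the defining minimization of $\xi_0$ underlying \eqref{eq:BDF2-xi}, the update satisfies
$$
\tfrac14\big(\|q^{n+1}\|^2+\|2q^{n+1}-q^n\|^2\big) - \tfrac14\big(\|\hat{q}^{n+1}\|^2+\|2\hat{q}^{n+1}-q^n\|^2\big) \le \delta t\,\eta\big(\cL\hat{\Psi}^{n+1},\cN(\overline{\Psi}^{n+1})\cL\hat{\Psi}^{n+1}\big).
$$
The crucial observation is that the relaxation leaves $\phi^{n+1}$ untouched, so the $\phi$-blocks of $\tilde{E}[\Psi^{n+1},\Psi^n]$ and $\tilde{E}[\hat{\Psi}^{n+1},\Psi^n]$ are identical and the left-hand side above is exactly $\tilde{E}[\Psi^{n+1},\Psi^n]-\tilde{E}[\hat{\Psi}^{n+1},\Psi^n]$. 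Adding this to the Step~1 inequality cancels the $\hat{q}$-contributions and telescopes across genuine energy levels, giving
$$
\tilde{E}[\Psi^{n+1},\Psi^n] - \tilde{E}[\Psi^n,\Psi^{n-1}] \le -\delta t(1-\eta)\big(\cL\hat{\Psi}^{n+1},\cN(\overline{\Psi}^{n+1})\cL\hat{\Psi}^{n+1}\big) \le 0,
$$
since $\eta\in[0,1]$ and the quadratic form is nonnegative.

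I expect the principal obstacle to be bookkeeping rather than anything conceptual. The one point demanding care is assembling the polarization identity on the $\cL_0$-weighted $\phi$-block and confirming that the discarded curvature term $\tfrac14\|\hat{\Psi}^{n+1}-2\Psi^n+\Psi^{n-1}\|_{\cL}^2$ is genuinely nonnegative, which rests on $\cL_0$ being positive semidefinite---the same property that renders $\tfrac12(\phi,\cL_0\phi)+\tfrac12\|q\|^2-A_0$ bounded below and that holds in each model example. Matching the $\tfrac14$-weighted $q$-norms in $\tilde{E}$ to the coefficients $a,b,c$ of Scheme~\ref{scheme:BDF2-relax} then confirms that the relaxation constraint is precisely the required energy-decrement statement, while the feasibility of $\xi_0$ (that $\xi=1$ lies in the admissible set, $a>0$, and the discriminant is nonnegative) follows verbatim from the argument already given for the Crank--Nicolson case.
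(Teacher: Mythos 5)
Your proposal is correct and follows essentially the same route as the paper: test the Step~1 equation against $\delta t\,\cL\hat{\Psi}^{n+1}$ to obtain the BDF2 energy identity (the paper's \eqref{eq:proof-bdf2-1}, where the nonnegative curvature term is discarded), invoke the relaxation constraint \eqref{eq:proof-bdf2-2}, and add the two inequalities. Your write-up is in fact slightly more careful than the paper's, since you make the polarization identity and the sign requirement on $\cL_0$ explicit, and you correctly use $1-\eta\geq 0$ where the paper's final line contains a sign typo.
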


\begin{proof}
The proof is similar with the proof for Scheme \ref{scheme:CN-relax}. In fact, if we take inner product of \eqref{eq:BDF2-step1} with $\delta t \hat{\Psi}^{n+1}$, we will have
\begin{multline} \label{eq:proof-bdf2-1}
\frac{1}{4} \Big[ (\phi^{n+1}, \cL_0 \phi^{n+1}) + (2\phi^{n+1} - \phi^n, \cL_0(2\phi^{n+1} -\phi^n)) + \| \hat{q}^{n+1} \|^2 + \| 2 \hat{q}^{n+1} - q^n\|^2 \Big] \\
- \frac{1}{4} \Big[ (\phi^n, \cL_0 \phi^n) + (2\phi^n- \phi^{n-1}, \cL_0(2\phi^n -\phi^{n-1})) + \| q^n \|^2 + \| 2 q^n- q^{n-1}\|^2 \Big] \\
\leq - \delta t  \Big( \cL \hat{\Psi}^{n+1}, \cN(\overline{\Psi}^{n+1}) \cL \hat{\Psi}^{n+1}  \Big).
\end{multline}
Meanwhile, we know from \eqref{eq:BDF2-xi} that
\beq \label{eq:proof-bdf2-2}
\frac{1}{4}( \| q^{n+1}\|^2 + \| 2q^{n+1} - q^n\|^2) - 
\frac{1}{4}( \| \hat{q}^{n+1} \|^2 + \| 2\hat{q}^{n+1}-q^n\|^2) \leq \delta t \eta \Big(\cL \hat{\Psi}^{n+1}, \cN(\overline{\Psi}^{n+1}) \cL \hat{\Psi}^{n+1}  \Big).
\eeq 
Adding the equations \eqref{eq:proof-bdf2-1} and \eqref{eq:proof-bdf2-2} together, we will have
\begin{multline} 
\frac{1}{4} \Big[ (\phi^{n+1}, \cL_0 \phi^{n+1}) + (2\phi^{n+1} - \phi^n, \cL_0(2\phi^{n+1} -\phi^n)) + \| q^{n+1} \|^2 + \| 2 q^{n+1} - q^n\|^2 \Big] \\
- \frac{1}{4} \Big[ (\phi^n, \cL_0 \phi^n) + (2\phi^n- \phi^{n-1}, \cL_0(2\phi^n -\phi^{n-1})) + \| q^n \|^2 + \| 2 q^n- q^{n-1}\|^2 \Big] \\
\leq - \delta t (1-\eta) \Big(\cL \hat{\Psi}^{n+1}, \cN(\overline{\Psi}^{n+1}) \cL \hat{\Psi}^{n+1}  \Big) \leq 0,
\end{multline}
since $1-\eta \leq 0$.
This completes the proof.

\end{proof}

\section{Numerical Results}
Next, we test this novel relaxed-EQ method on several specific thermodynamically consistent models. We consider problems with periodic boundary conditions for simplicity, though we emphasize our approach has no specific restriction on the boundary conditions. For spatial discretization, we use the Fourier Pseudo-spectral method. Due to space limitation, we only test the CN schemes, i.e. Scheme \ref{scheme:CN} and Scheme \ref{scheme:CN-relax} and choose $\eta = 1$.

For the AC equation and the CH equation, we consider a square domain $\Omega=[0, 1]^2$ with seven disks in the 2D domain as an initial condition, similar with the problem in \cite{GuoBenchmark}. It is known that the disks will shrink and eventually disappear for the dynamics driven by the AC equation since the AC equation doesn't preserve the total volume. Meanwhile,  the Ostwald ripening effect will occur for the CH equation dynamics since the CH equation maintains the total volume while minimizing the interfaces. Numerical results for the two dynamics are summarized in Figure \ref{fig:AC-CH}(a) and \ref{fig:AC-CH}(b), respectively, where we observe the expected dynamics. 

\begin{figure}[H]
\subfigure[dynamics driven by the AC equation with $\phi$  at $t=0,10,50,60$]{
\includegraphics[width=0.24\textwidth]{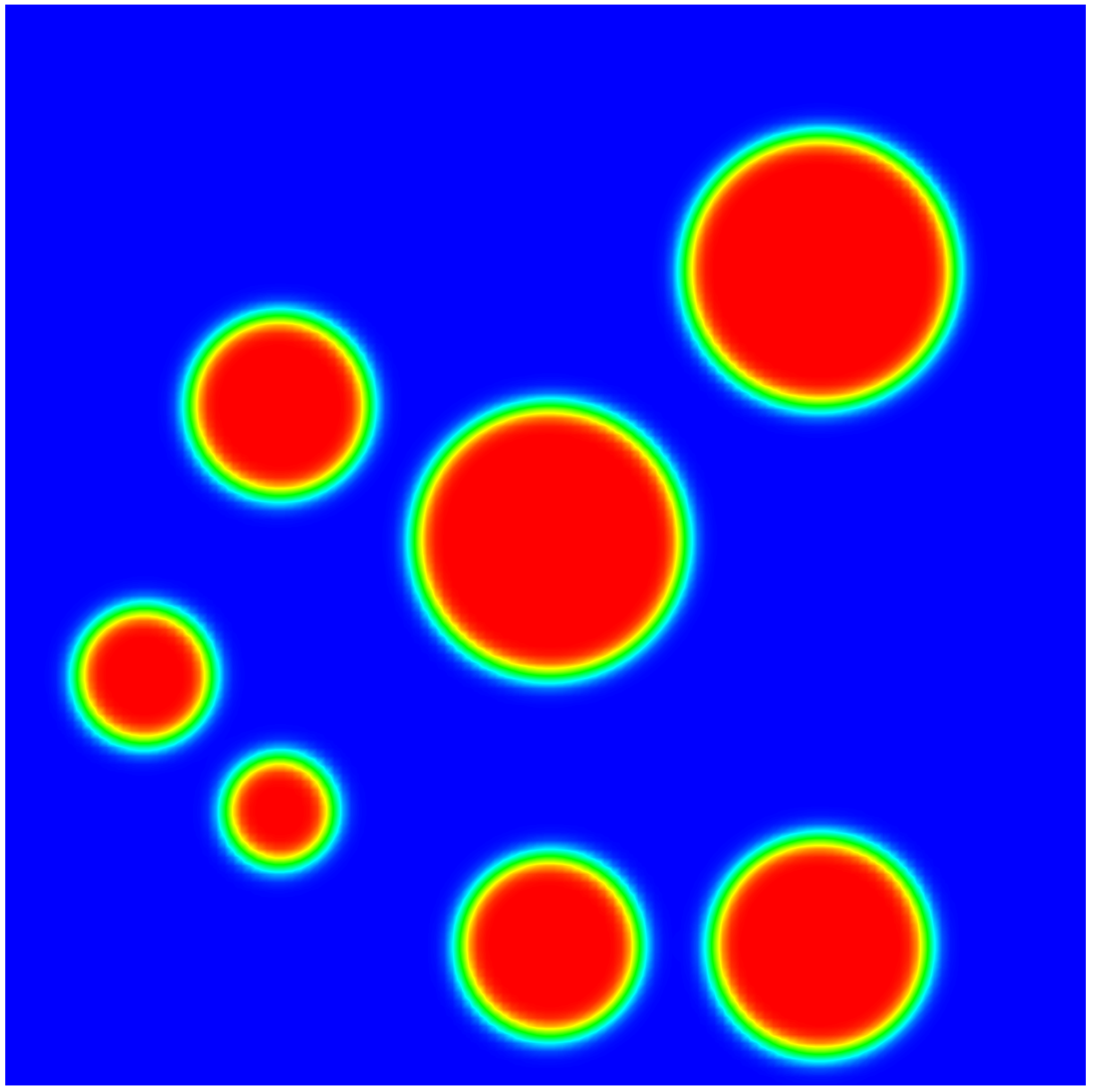}
\includegraphics[width=0.24\textwidth]{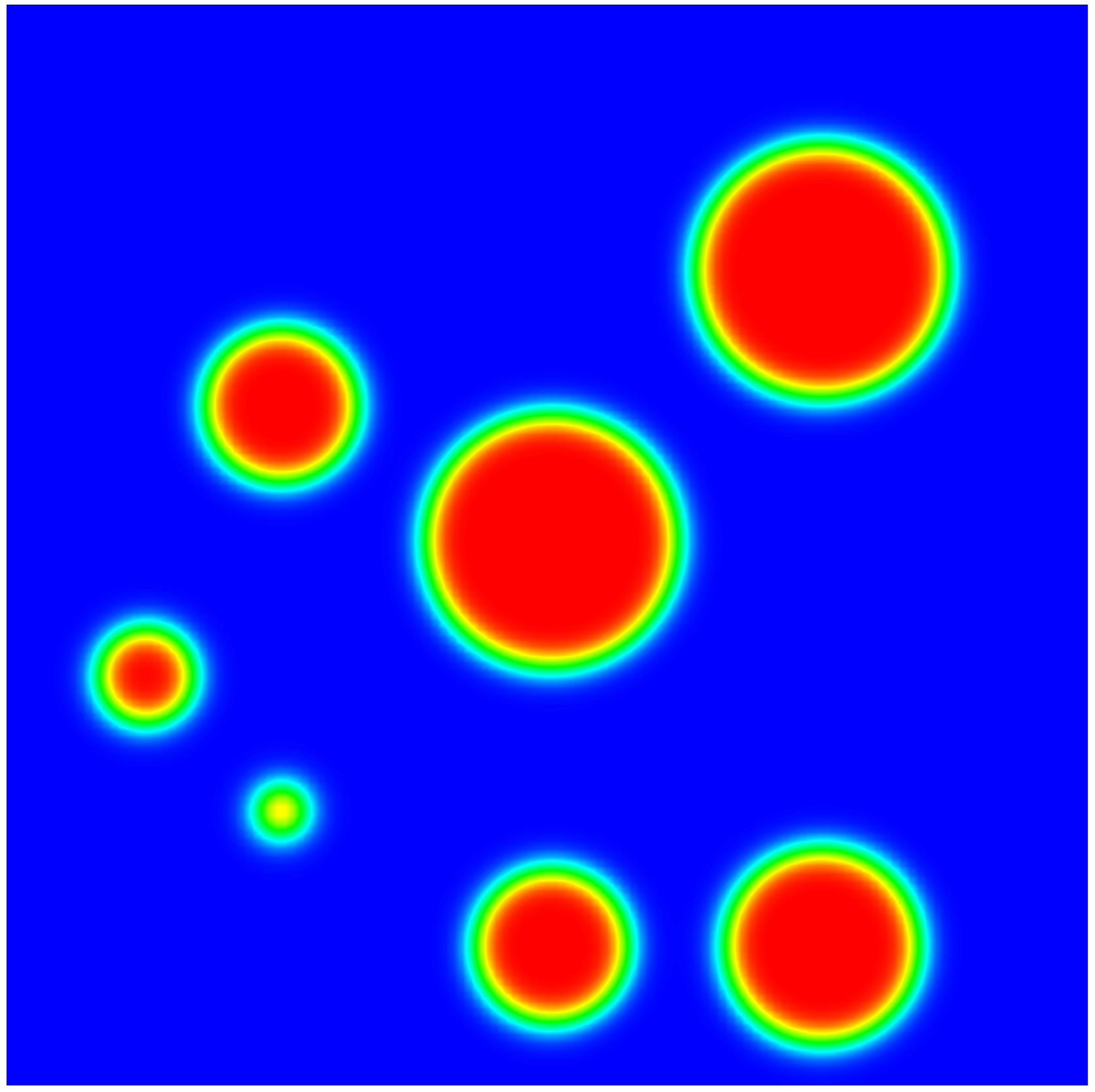}
\includegraphics[width=0.24\textwidth]{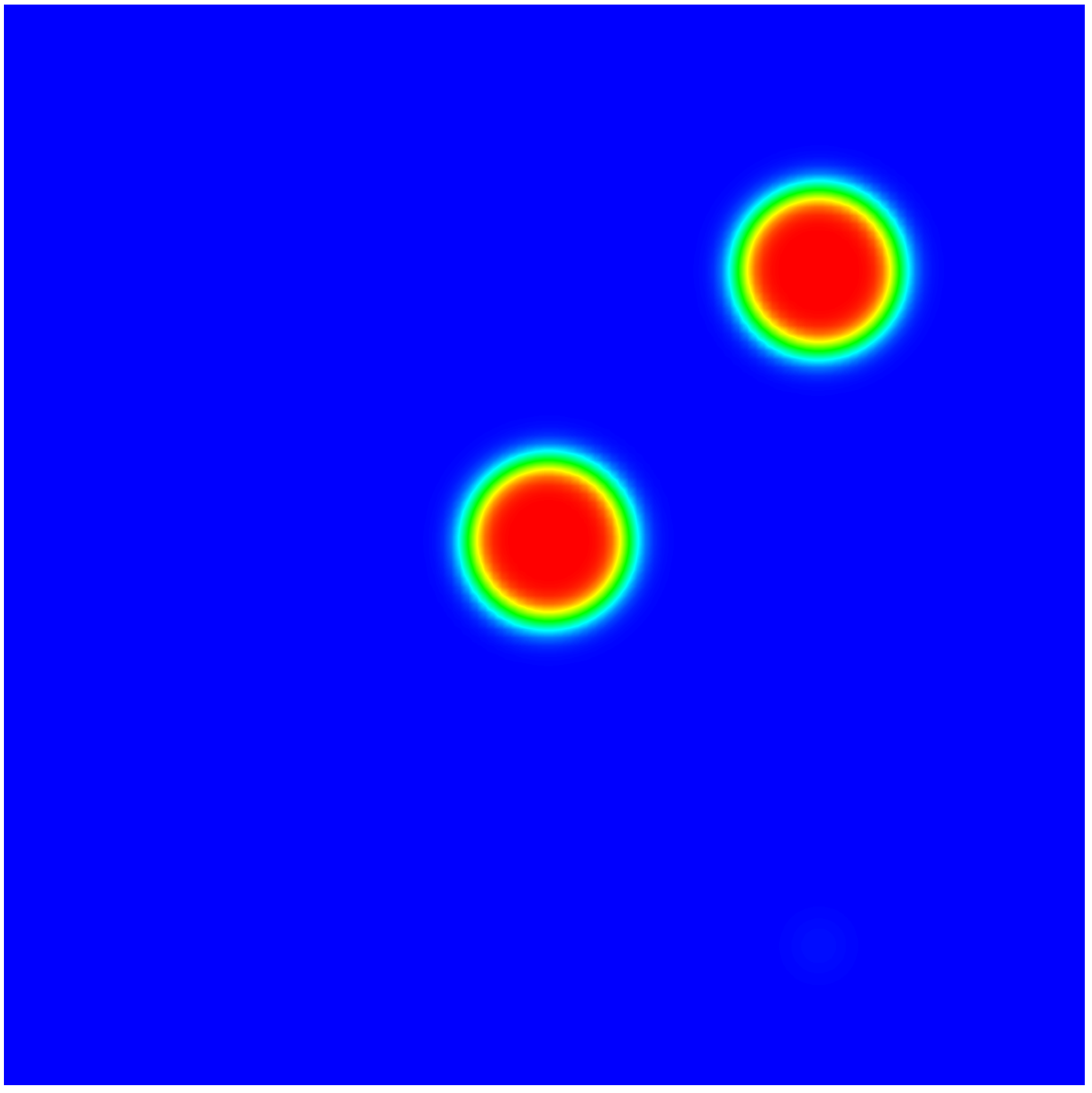}
\includegraphics[width=0.24\textwidth]{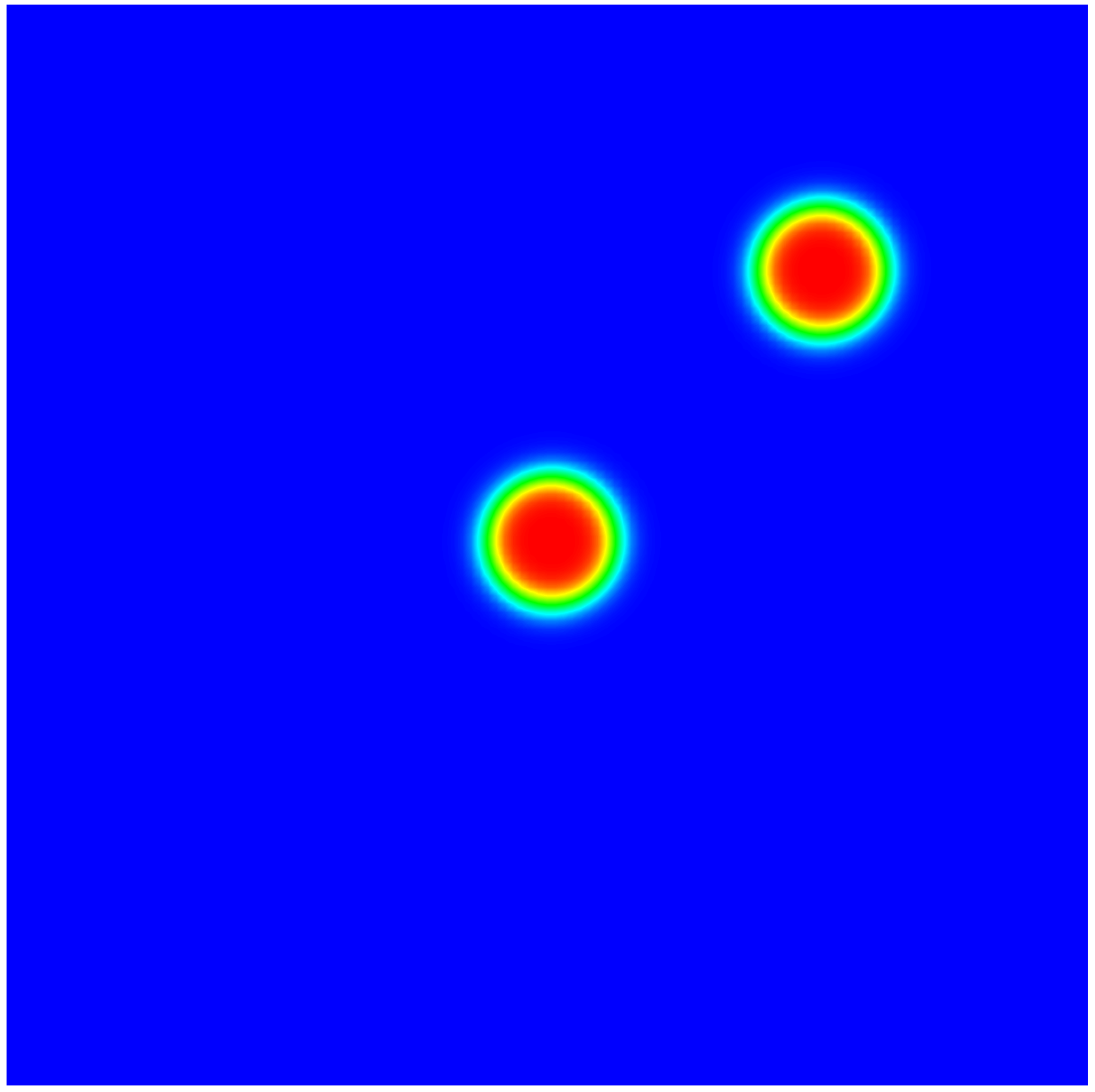}
}

\subfigure[dynamics driven by the CH equation with $\phi$ at $t=0,10,50,100$]{
\includegraphics[width=0.24\textwidth]{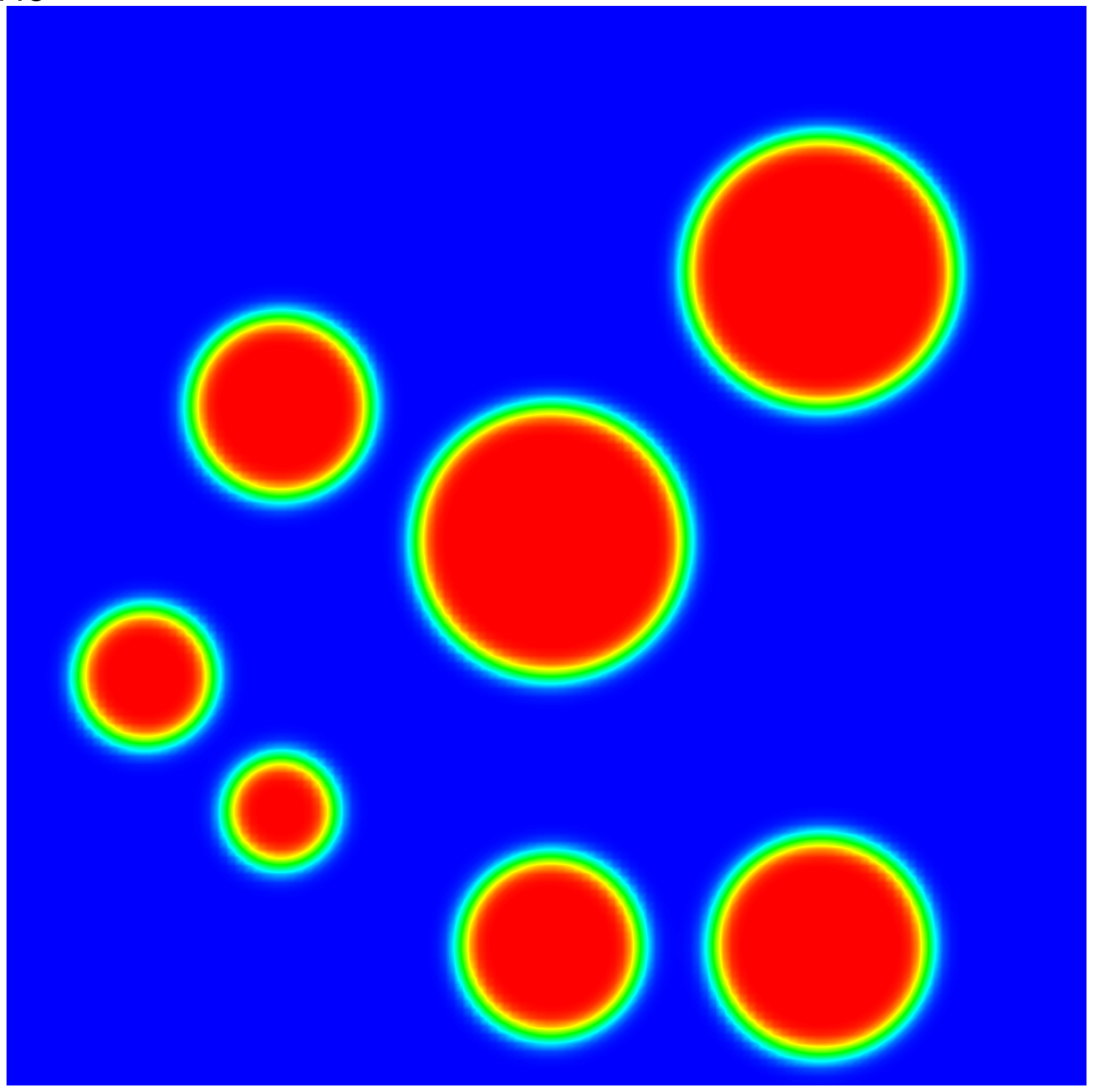}
\includegraphics[width=0.24\textwidth]{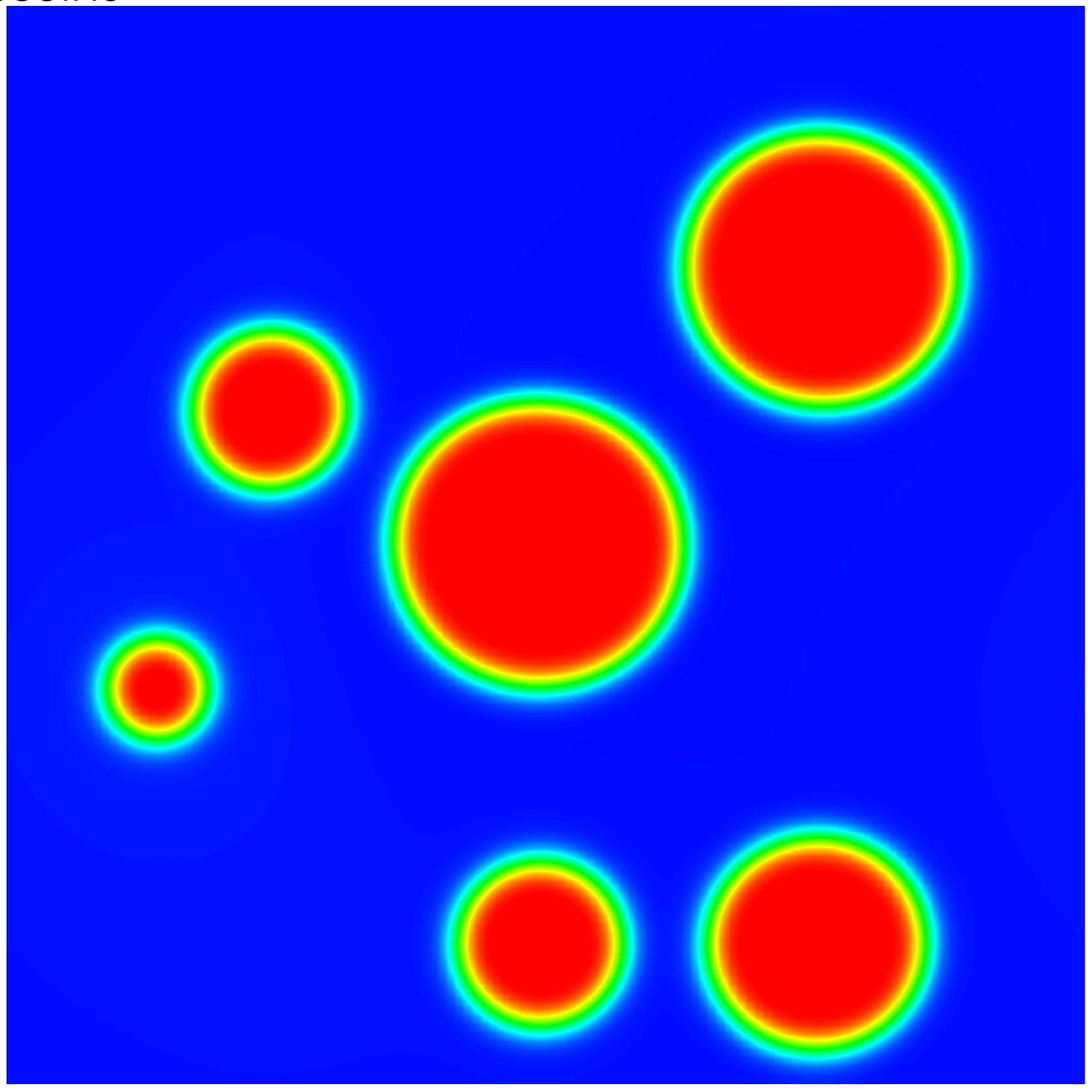}
\includegraphics[width=0.24\textwidth]{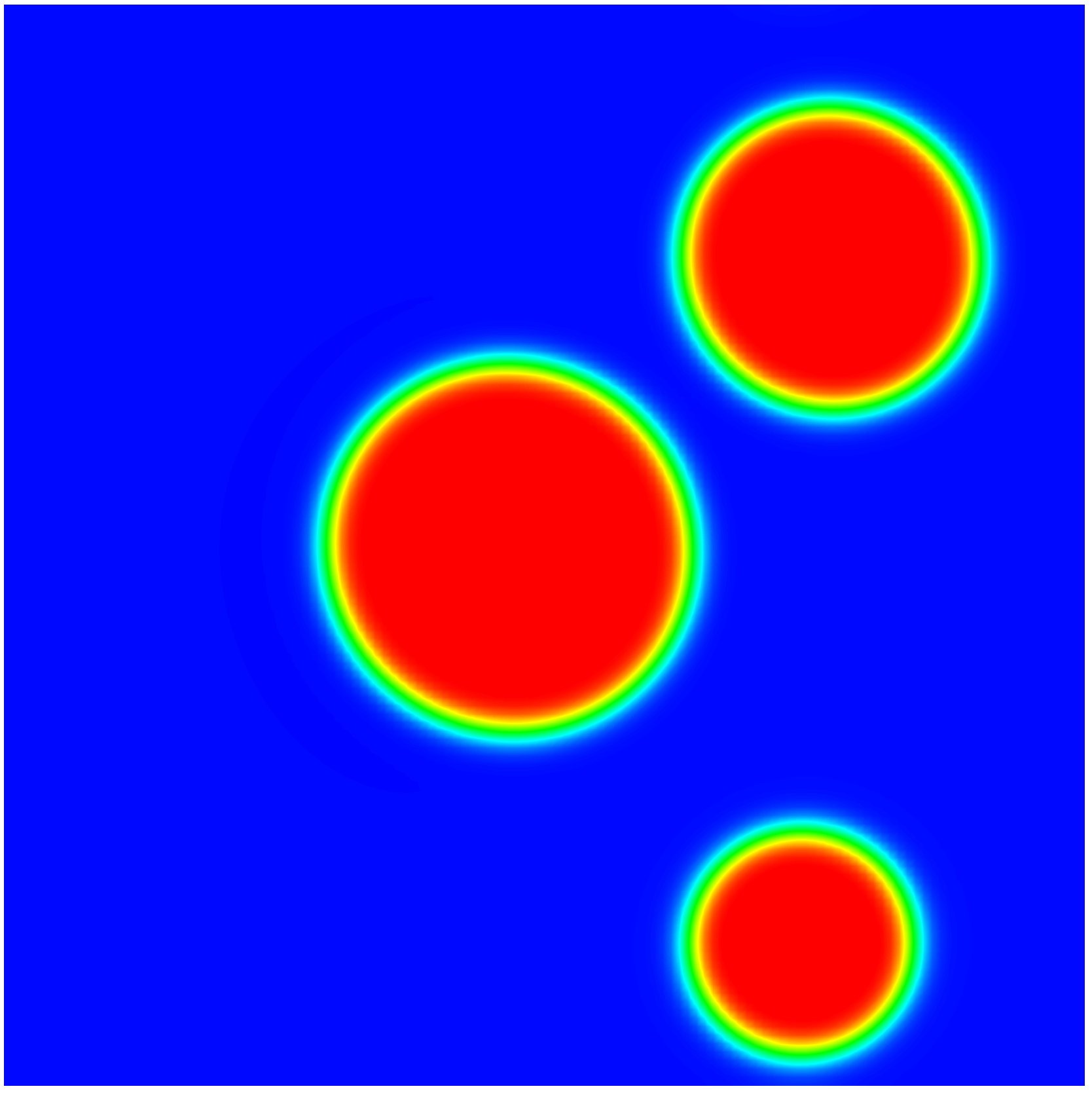}
\includegraphics[width=0.24\textwidth]{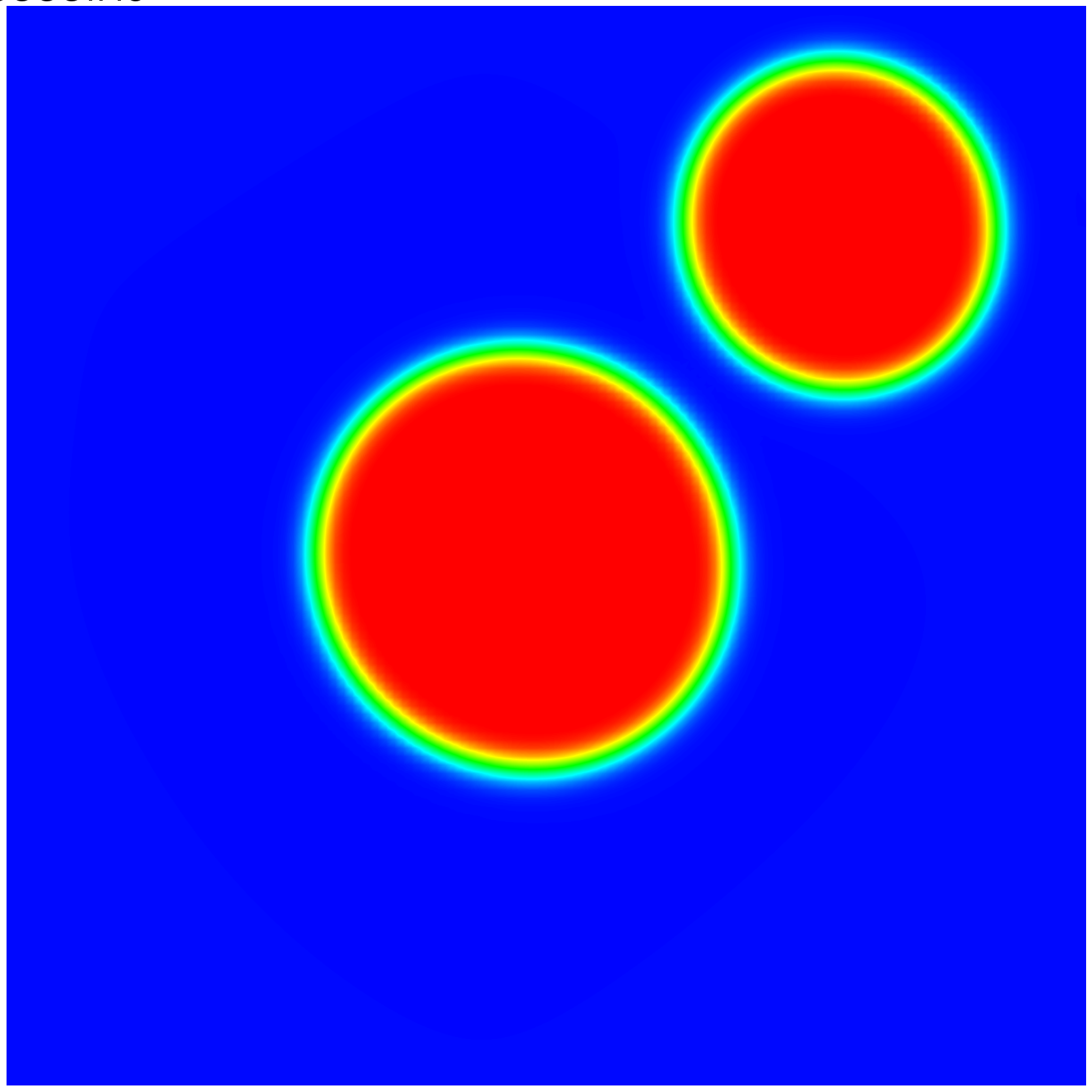}
}
\caption{Evolution dynamics driven by the Allen-Cahn equation and Cahn-Hilliard equation respectively. (a) the profiles of $\phi$ at various time slots driven by the AC equation; (b) the profiles of $\phi$ at various time slots driven by the CH equation.}
\label{fig:AC-CH}
\end{figure}

One particular focus of this letter is how the relaxed-EQ method performs compared with the baseline EQ method.  We solve the problem in Figure \ref{fig:AC-CH} using both the EQ and relaxed-EQ methods with $128^2$ meshes and a large time step, namely, $\delta t = 0.75$ for the AC equation, and $\delta t=0.005$ for the CH equation. The numerical energies using both approaches are summarized in Figure \ref{fig:Energy}(a) and \ref{fig:Energy}(b) respectively for the AC and CH equations. Notice both algorithms perform well when the time step is small enough. When the time step is large, the relaxed-EQ method outperforms the baseline EQ method.

\begin{figure}
\center
\subfigure[Energy with the AC equation]{\includegraphics[width=0.32\textwidth]{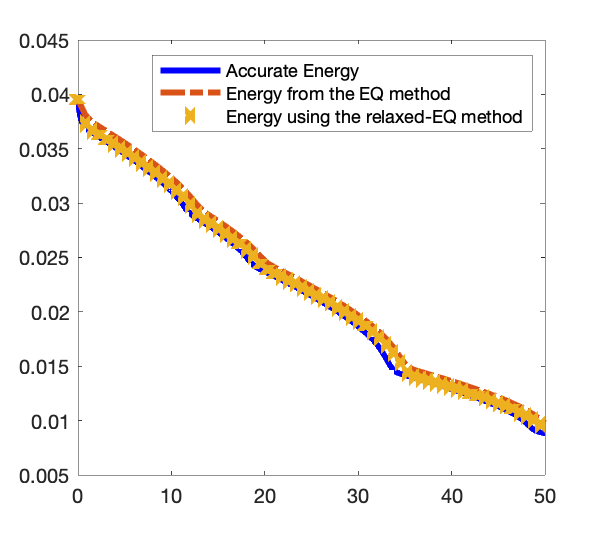}}
\subfigure[Energy with the CH equation]{\includegraphics[width=0.32\textwidth]{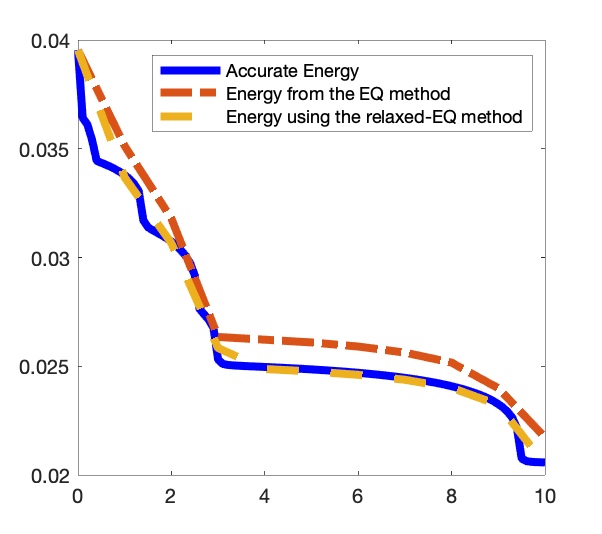}}
\subfigure[Energy with the MBE equation]{\includegraphics[width=0.32\textwidth]{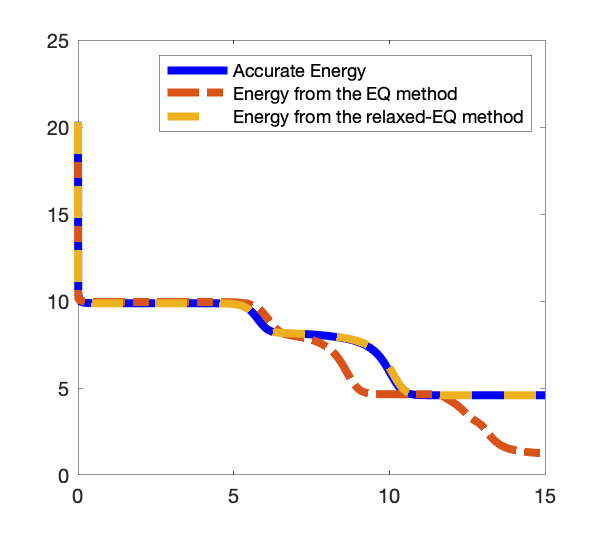}}
\caption{Numerical energy comparisons between the EQ method and the relaxed-EQ method. In this figure, we test the AC equation, the CH equation and the MBE equation with both numerical methods using the same time step. This figure indicates that the relaxed-EQ method provides more accurate results for all cases than the baseline EQ method with a same time step.}
\label{fig:Energy}
\end{figure}

Furthermore, we also use both methods to solve the MBE equation. We use classical benchmark problems from \cite{Wang&Wang&WiseDCDS2010, Li&Liu2003},  with the same initial conditions and parameters. Due to space limitations, the numerical results are not shown. Instead, the energy evolutions using both numerical methods with a time step $\delta t = 0.001$ are summarized in Figure \ref{fig:Energy}(c). We observe that the relaxed-EQ method provides more accurate results than the EQ method for solving the MBE model as well.

In the last example, we solve the PFC model using the relaxed-EQ scheme to further highlight its power for simulating long-time dynamics. We set up the initial conditions as Figure \ref{fig:PFC}(a) with three pieces of crystal blocks. The numerical results of crystal growth dynamics are summarized in Figure \ref{fig:PFC}.  We observe dynamics with the appearance of micro-structure and dislocations, similarly as reported in the literature.

\begin{figure}[H]
\center
\subfigure[the profile of $\phi$ at $t=0,50,75$]{
\includegraphics[width=0.3\textwidth]{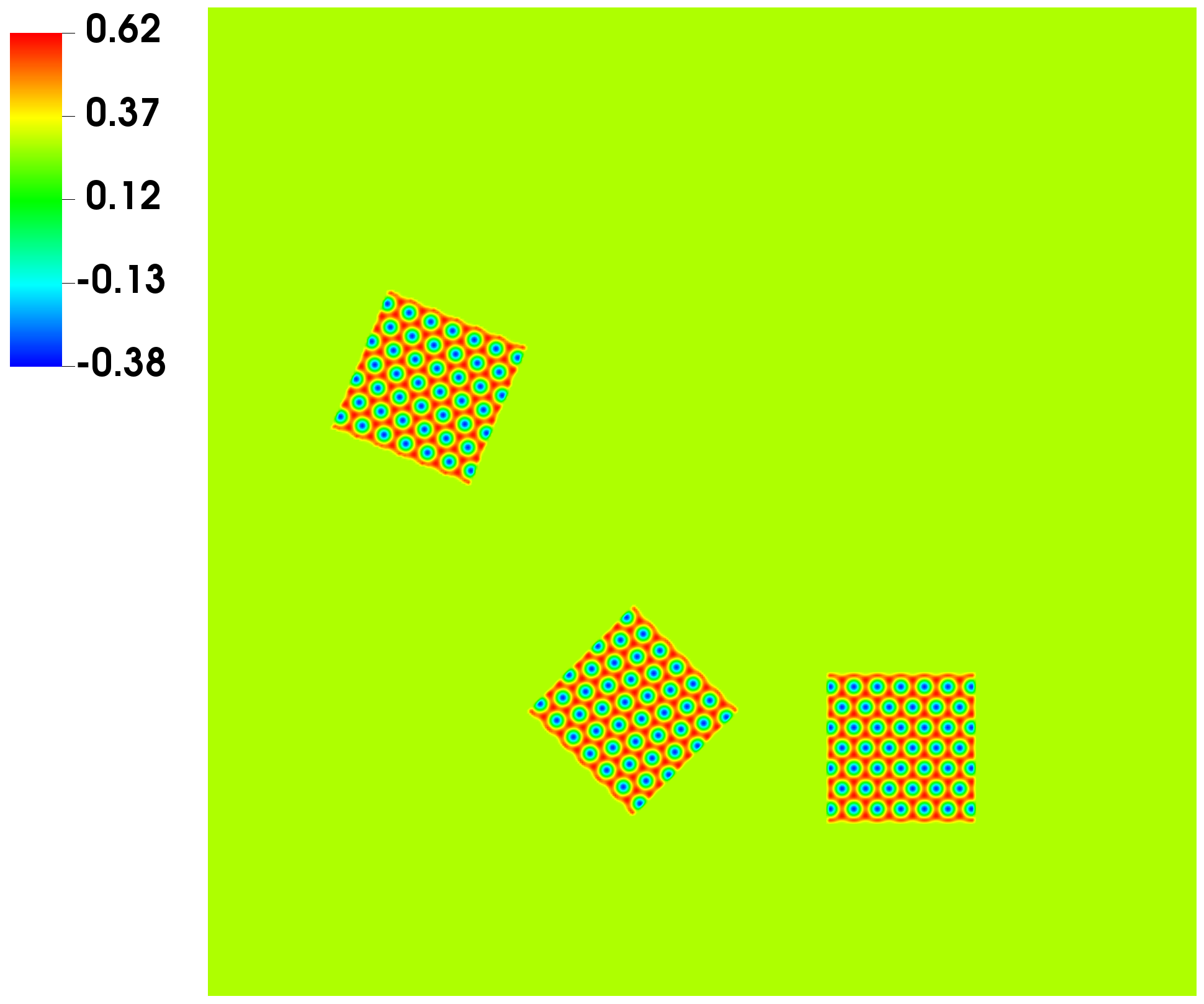}
\includegraphics[width=0.3\textwidth]{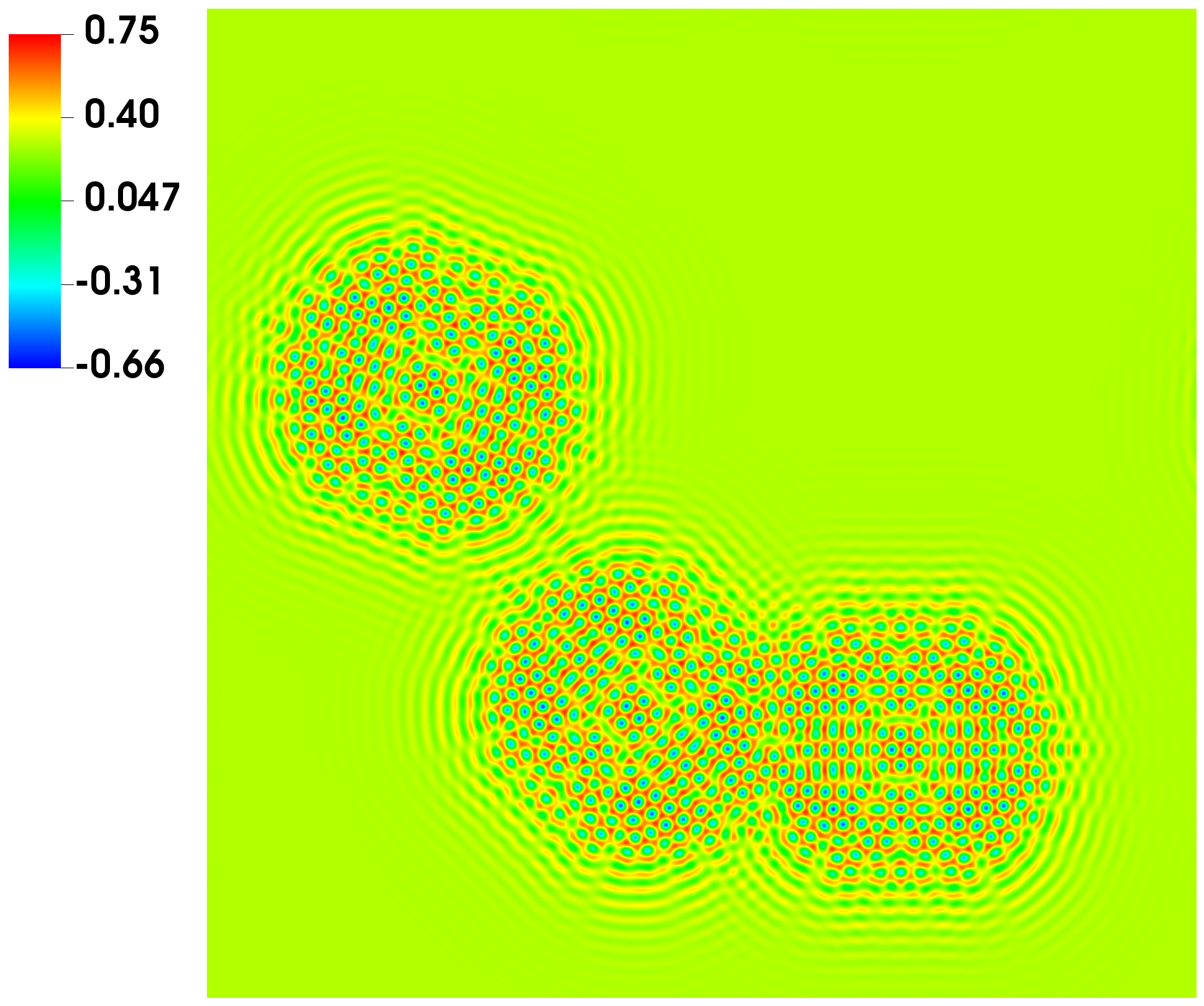}
\includegraphics[width=0.3\textwidth]{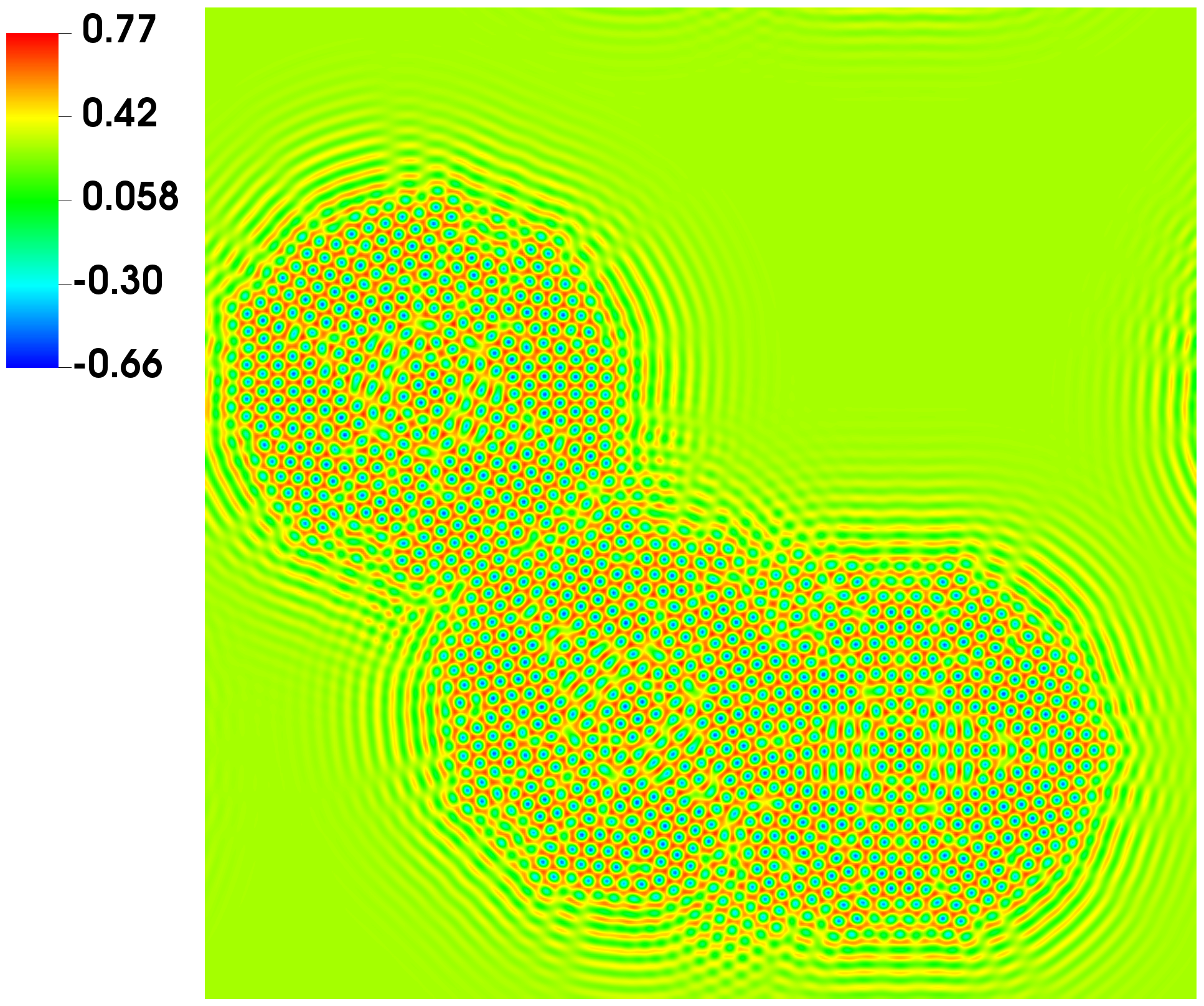}
}

\subfigure[the profile of $\phi$ at $t=100,150,1800$]{
\includegraphics[width=0.3\textwidth]{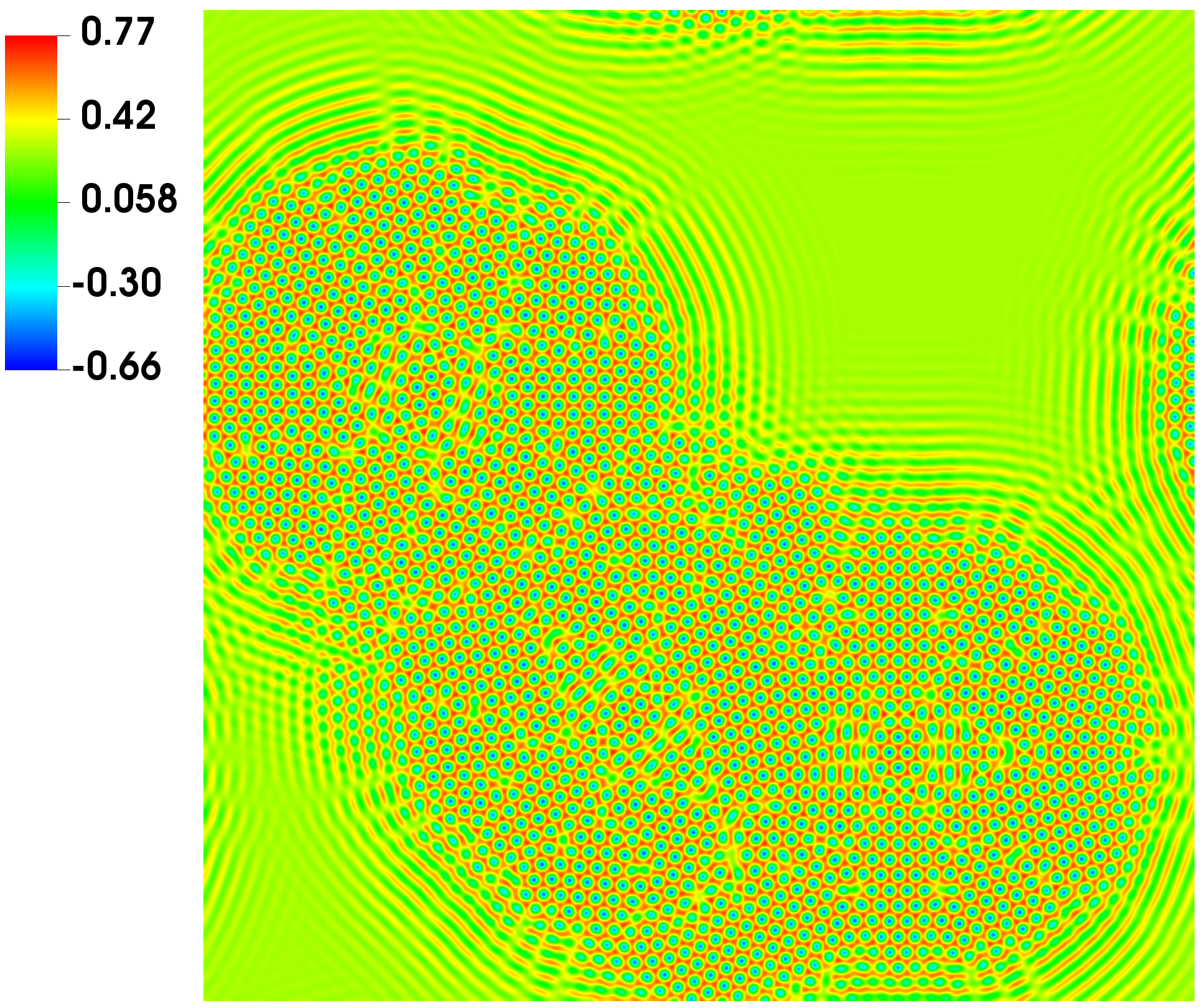}
\includegraphics[width=0.3\textwidth]{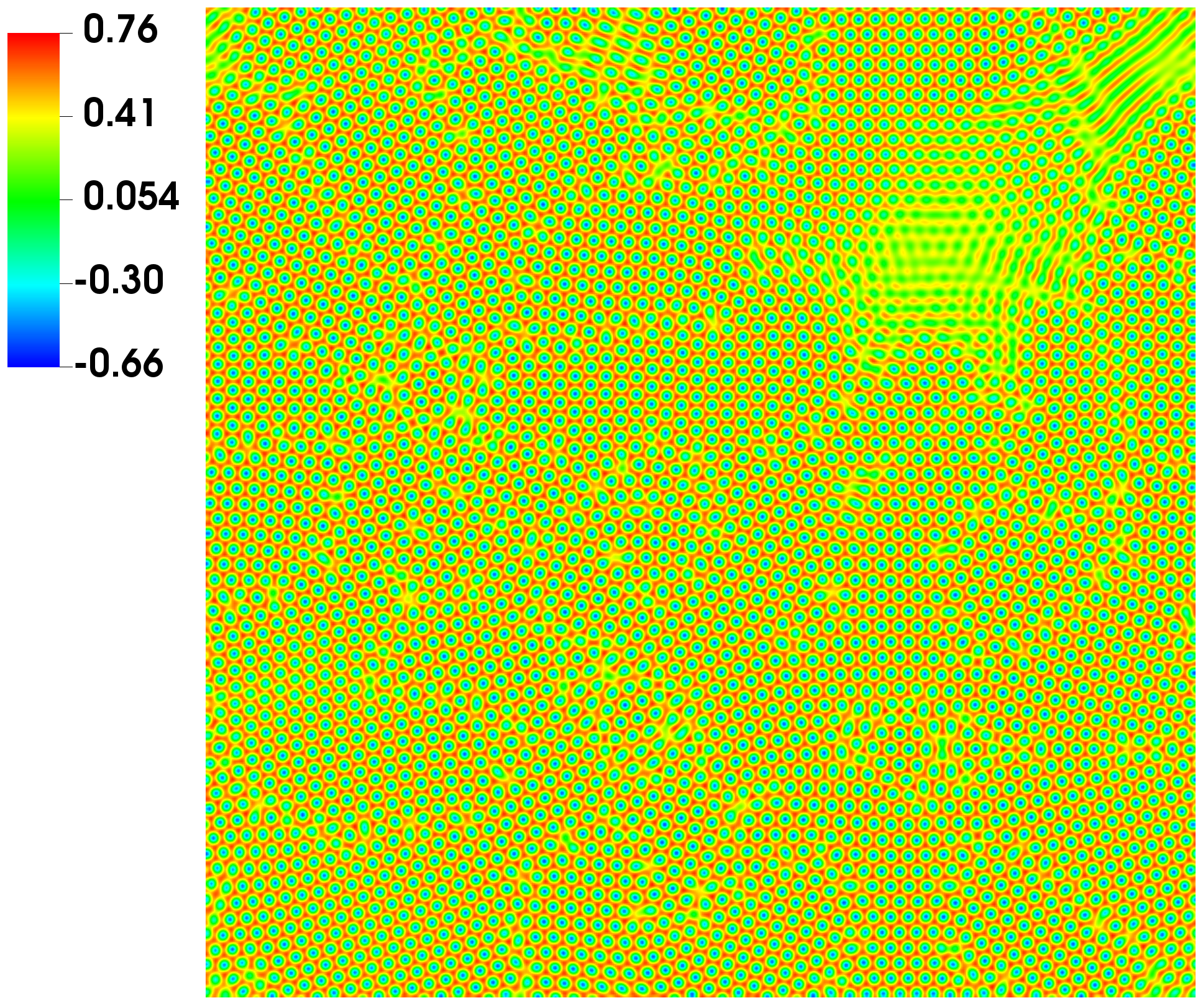}
\includegraphics[width=0.3\textwidth]{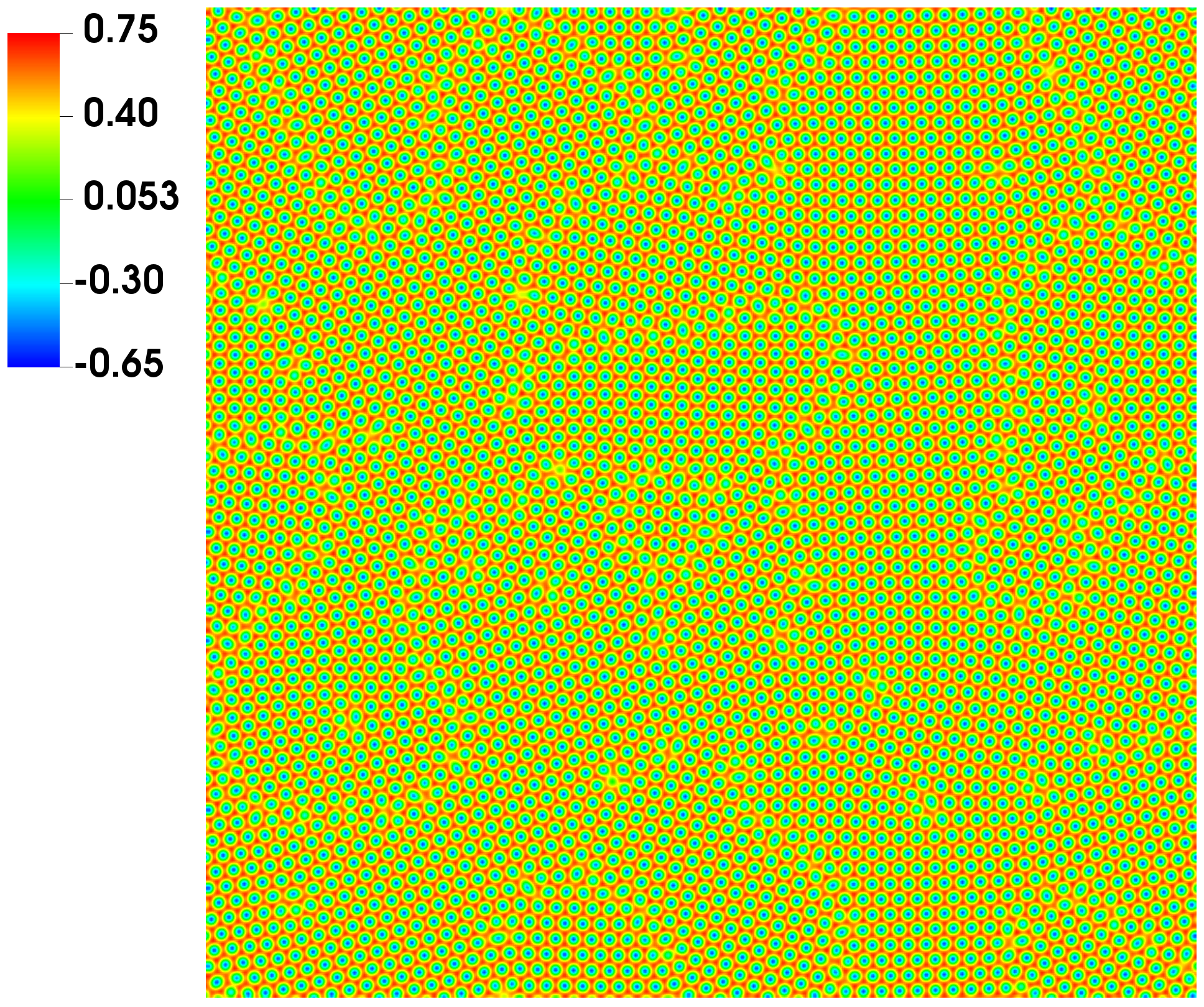}
}
\caption{Microstructure evolution dynamics for triangular crystal phases driven by the phase field crystal model.  The profiles of $\phi$ at various time slots are shown. }
\label{fig:PFC}
\end{figure}

\section{Conclusion}
This letter reports a relaxation technique to overcome the inconsistency issue of the modified free energy and the original energy in the energy quadratization (EQ) method. Through theorems and numerical examples, it is noticeable that the relaxed-EQ method always outperforms the baseline EQ method. In particular, it overcomes the EQ method's primary issue that the auxiliary variable deviates from its original continuous definition after discretization as numerical errors are introduced and accumulated. In the relaxed-EQ method, the numerical solution of the auxiliary variable is relaxed towards its continuous definition in each time step with negligible computational cost. Overall, this letter's main message is that the relaxation step shall be applied to all available EQ schemes in literature with its easy-to-use and accuracy-improving nature.

\section*{Acknowledgments}
Jia Zhao would like to thank Prof. Qi Wang from the University of South Carolina for inspiring discussions on the generalized Onsager principles. Jia Zhao would also like to acknowledge the support from National Science Foundation with grant NSF-DMS-1816783, and NVIDIA Corporation for the donation of a Quadro P6000 GPU for conducting some of the numerical simulations in this paper.

\bibliographystyle{unsrt}

\begin{thebibliography}{10}
	
	\bibitem{GuoBenchmark}
	J.~M. Church, Z.~Guo, P.~K. Jimack, A.~Madzvamuse, K.~Promislow, B.~Wettona
	nd~S.~Wise, and F.~Yang.
	\newblock High accuracy benchmark problems for allen-cahn and cahn-hilliard
	dynamics.
	\newblock {\em Communication in Computational Physics}, 26:947--972, 2019.
	
	\bibitem{GongEnergy}
	Y.~Gong and J.~Zhao.
	\newblock Energy-stable {R}unge-{K}utta schemes for gradient flow models using
	the energy quadratization approach.
	\newblock {\em Applied Mathematics Letters}, 94:224--231, 2019.
	
	\bibitem{Guill2013On}
	F.~Guill\'{e}n-Gonz\'{a}lez and G.~Tierra.
	\newblock On linear schemes for a {Cahn-Hilliard} diffuse interface model.
	\newblock {\em Journal of Computational Physics}, 234:140--171, 2013.
	
	\bibitem{Han&WangJCP2015}
	D.~Han and X.~Wang.
	\newblock A second order in time uniquely solvable unconditionally stable
	numerical schemes for {C}ahn-{H}illiard-{N}avier-{S}tokes equation.
	\newblock {\em Journal of Computational Physics}, 290(1):139--156, 2015.
	
	\bibitem{Li&Liu2003}
	B.~Li and J.~G. Liu.
	\newblock Thin film epitaxy with or without slope selection.
	\newblock {\em European Journal of Applied Mathematics}, 14:713--743, 2003.
	
	\bibitem{Qiao&Zhang&TangSISC2011}
	Z.~Qiao, Z.~Zhang, and Tao Tang.
	\newblock An adaptive time-stepping strategy for the molecular beam epitaxy
	models.
	\newblock {\em SIAM Journal on Scientific Computing}, 33(3):1395--1414, 2011.
	
	\bibitem{SAV-1}
	J.~Shen, J.~Xu, and J.~Yang.
	\newblock The scalar auxiliary variable (sav) approach for gradient flows.
	\newblock {\em Journal of Computational Physics}, 353:407--416, 2018.
	
	\bibitem{Wang&Wang&WiseDCDS2010}
	C.~Wang, X.~Wang, and S.~Wise.
	\newblock Unconditionally stable schemes for equations of thin film epitaxy.
	\newblock {\em Discrete and Continuous Dynamic Systems}, 28(1):405--423, 2010.
	
	\bibitem{Yang&Zhao&WangJCP2017}
	X.~Yang, J.~Zhao, and Q.~Wang.
	\newblock Numerical approximations for the molecular beam epitaxial growth
	model based on the invariant energy quadratization method.
	\newblock {\em Journal of Computational Physics}, 333:102--127, 2017.
	
	\bibitem{Zhao&Yang&Gong&Zhao&Yang&Li&Wang2018}
	J.~Zhao, X.~Yang, Y.~Gong, X.~Zhao, X.~Yang, J.~Li, and Q.~Wang.
	\newblock A general strategy for numerical approximations of non-equilibrium
	models--part i thermodynamical systems.
	\newblock {\em International Journal of Numerical Analysis and Modeling},
	15(6):884--918, 2018.
	
\end{thebibliography}

\end{document}